\newcommand{\cmark}{\ding{51}}%
\newcommand{\xmark}{\ding{55}}%
\definecolor{darkgrey}{rgb}{0.75,0.75,0.75}
\newtheorem{theorem}{Theorem}
\newtheorem{proposition}{Proposition}[section]
\newtheorem{lemma}[proposition]{Lemma}
\newtheorem{definition}[proposition]{Definition}
\theoremstyle{definition}
\newtheorem{remark}[proposition]{Remark}
\renewcommand{\@fnsymbol}[1]{\ensuremath{%
   \ifcase#1\or 1\or 2\or 3\or
   \mathsection\or \mathparagraph\or \|\or 1\or
   2\or 3 \else\@ctrerr\fi}}
\numberwithin{equation}{section}
\begin{document}
\title{\Large The infinitely many genes model with horizontal gene
  transfer} \author{Franz Baumdicker\thanks{Abteilung f\"ur
    Mathematische Stochastik, Albert-Ludwigs University of Freiburg,
    Eckerstr.~1, D--79104 Freiburg, Germany}\; and Peter
  Pfaffelhuber$\mbox{}^{1,}$\thanks{e-mail:
    p.p@stochastik.uni-freiburg.de}}

\thispagestyle{empty}

\date{\today} 

\maketitle

\begin{abstract}
  \noindent
  The genome of bacterial species is much more flexible than
  that of eukaryotes. In particular, the distributed genome hypothesis
  for bacteria states that the total number of genes present in a
  bacterial population is greater than the genome of every single
  individual. The \emph{pangenome}, i.e.\ the set of all genes of a
  bacterial species (or a sample), comprises the core genes which are
  present in all living individuals, and accessory genes, which are
  carried only by some individuals. Bacteria have developed mechanisms
  in order to exchange genes horizontally, i.e.\ without a direct
  relationship.
  % In order to use accessory genes
  % for adaptation to environmental forces, genes can be transferred
  % horizontally between individuals.
  Here, we extend the \emph{infinitely many genes model} from
  Baumdicker, Hess and Pfaffelhuber (2010) for such horizontal gene
  transfer. We take a genealogical view and give a construction --
  called the \emph{Ancestral Gene Transfer Graph} -- of the joint
  genealogy of all genes in the pangenome. As application, we compute
  moments of several statistics (e.g.\ the number of differences
  between two individuals and the gene frequency spectrum) under the
  infinitely many genes model with horizontal gene transfer.
\end{abstract}

\noindent {\bf Keywords and phrases}: Prokaryote, bacterial evolution,
coalescent, gene frequency spectrum, pangenome

~

\noindent {\bf AMS Subject Classification}: 92D15, 60J70, 92D20
(Primary); 60K35 (Secondary)

\section{Introduction}
Many prokaryotic species (i.e.\ bacteria and archea) are now known to
have highly flexible genomes (e.g.\
\citealp{Tettelin2005,Ehrlich2005,Tettelin2008,Koonin:2012:Front-Cell-Infect-Microbiol:22993722}).
Unlike in eukaryotes, genes can be transferred horizontally (i.e.\
without a direct relationship between donor and recipient) between
prokaryotic individuals of either different or the same population.
As a result, gene content can differ substantially between strains
from the same population. For example, the pathogenic strain
\emph{E. coli} O157:H7 carries 1387 genes which are absent in the
commensal strain \emph{E. coli} K-12 \citep{Perna2001}. This huge
variation in gene content led to the concepts of the \emph{distributed
  genome} of bacteria and their \emph{pangenome}
\citep{Tettelin2005,Ehrlich2005}. 

In order to understand the growing amount of genomic data from
bacterial species, classical population genetic theory -- using
mutation, selection, recombination and genetic drift as main
evolutionary forces -- must be extended in order to include realistic
mechanisms of horizontal gene transfer (HGT). Since genomic data from
prokaryotic species has become abundant only recently, HGT can in
particular be seen as a newly discovered evolutionary factor
\citep{Doolittle:1999:Trends-Cell-Biol:10611671,Koonin:1997:Mol-Microbiol:9379893}.
However, theoretical work on the population genomics of HGT is still
in its infancy. In order to include HGT in population genetic models,
some scenarios have to be distinguished according to the following
basal mechanisms: Transformation, which is the uptake of genetic
material from the environment. Transduction, which describes the
infection of a prokaryote by a lysogenic virus (phage) which provides
additional genetic material that can be integrated into the bacterial
genome. Conjugation, which is also termed \emph{bacterial sex}, which
requires a direct link (pilus) between two bacterial cells and leads
to exchange of genetic material. In addition, small virus-like
elements called Gene Transfer Agents (GTAs) have been found which may
become even more important for the amount of horizontal genetic
exchange in some species \citep{pmid20929803}. Another mechanism of
horizontal gene transfer are due to mobile genetic elements like
plasmids, gene cassettes and transposons, which transfer genes even
within a single individual
\citep{delaCruz:2000:Trends-Microbiol:10707066}. Although all these
mechanisms transfer only parts of the gene sequences, it is a valid
approach to model only complete gene transfers events, since bacteria
are efficient in getting rid of non-functional genetic material.  Most
importantly, when considering horizontal gene transfer by
transformation, transduction and conjugation, transformation (and to a
lesser extend also transduction) transfers genes mainly between
distantly related species, while conjugation only works for bacteria
from closely related species.

~

In \cite{BaumdickerHessPfaffelhuber2010} (see also
\citealp{BaumdickerHessPfaffelhuber2011}), we presented the
\emph{infinitely many genes model}, a population genomic model which
includes HGT from different species, e.g.\ by transformation, but no
HGT within species. It accounts for \emph{gene gain} (or gene uptake)
from the environment (at rate $\theta/2$) along the genealogical tree
which describes the relationships between the individuals of the
population. The term \emph{gene gain} covers HGT from other species as
well as gene genesis (the formation of new genes), because from the
perspective of a species under consideration these two mechanisms are
indistinguishable.  Pseudogenization may lead to deletion of genes and
is incorporated by \emph{gene loss} (at rate $\rho/2$). The model uses
the coalescent \citep{Kingman1982,Hudson1983} as underlying genealogy
instead of a fixed (phylogenetic) tree. On the latter, the same two
mechanisms were studied already by
\cite{Huson:2004:Bioinformatics:15044248}. 

In the present paper, we extend the infinitely many genes model in
order to incorporate events of intraspecies horizontal gene
transfer. We stress that HGT in bacteria differs from crossover
recombination in eukaryotes, since only single, non-homologous, genes
are horizontally transferred in bacteria, while only homologous
genomic regions are transferred by recombination in
eukaryotes. Accordingly, since we aim at a genealogical picture of HGT
in bacteria, the ancestral recombination graph
\citep{Hudson1983,GriffithsMarjoram1997} as an extension of the
coalescent, cannot be used. Rather, we model HGT such that each gene
present in the population comes with its own events of HGT, resulting
in the Ancestral Gene Transfer Graph (AGTG). In the limit of large
population sizes, we compute moments of several quantities of
interest. The gene frequency spectrum -- see Theorem~\ref{T1} --
describes the amount of genes present in $k$ out of $n$
individuals. In \cite{BaumdickerHessPfaffelhuber2011} the gene
frequency spectrum has been used to test whether a bacterial
population shows unusual patterns for neutral evolution.  In
Theorem~\ref{T2}, we give our results for the expectations of the
average number of genes per individual, and the average number of
symmetric pairwise differences and the total number of genes,
respectively. Calculations which give the variances of some of these
quantities, can be carried out using the AGTG and are given in
Theorem~\ref{T3}.

~

The paper is organized as follows: In Section~\ref{S:model}, we
introduce the infinitely many genes model with horizontal gene
transfer. After stating our results in Section~\ref{S:results}, we
discuss our results with a view towards biological applications in
Section~\ref{sec:disc}. In Section~\ref{S:AGTG} we introduce our main
tool, the AGTG. The proofs of the main results, Theorems \ref{T1}--\ref{T3}, are given in
Section~\ref{S:proofs}.

\section{The model}
\label{S:model}
We introduce two different views on the same model. In this section,
we describe a Moran model forwards in time, including events of gene
gain, gene loss and horizontal transfer of genes; see also
Figure~\ref{graphicmoran} for a graphical representation. Later, in
Section~\ref{S:AGTG} we describe how to obtain the distribution of
genes in equilibrium using a genealogy-based approach.

% \pgfmathdeclarerandomlist{rndcolor}{{red}{blue}{green}{yellow}}
% red, green, blue, cyan , magenta, yellow, brown, lime, olive, orange, pink, purple, teal, violet
\begin{SCfigure}
\centering
\pgfmathsetseed{111206}
\begin{tikzpicture}
% Moran model
    \draw[thick, <- ] (0,1.5) --(0,3.5);
    \draw[white, <- ] (6,1.5) --(6,3.5);
    \node[above] at (0,3.5){time};
    \foreach \x in {1,...,5}{
    \draw[ultra thick] (\x,0)--(\x,5*1.5);
    \node at (\x,9.2){\x};};
% resampling
    \foreach \x in {1,...,5}
    {
    \pgfmathsetmacro{\Xa}{random(1,5)}
    \pgfmathsetmacro{\Xb}{random(1,5)}
    \pgfmathsetmacro{\Ya}{5*random()*1.5}
    \ifx\Xa\Xb \else\draw [->,ultra thick, black] (\Xa,\Ya) -- (\Xb,\Ya);\fi
%     \draw [ultra thick, red] (\Xb,\Yb) to (\Xb,\Yb);
    };
% HGTs
\foreach \y in {1,...,3}
{
%     \pgfmathdeclarerandomlist{rndcolor}{{red}{blue}{green}{yellow}{cyan}{magenta}{brown}{lime}{olive}{orange}{pink}{purple}{teal}{violet}}
     \pgfmathdeclarerandomlist{rndcolor}{{black}}
     \pgfmathrandomitem{\mycol}{rndcolor}
     \pgfmathrandomitem{\mycolt}{rndcolor}
     \pgfmathsetmacro{\dark}{random(30,60)}
     \foreach \x in {1,...,3}
     {
       % \pgfmathparse{0.9*rnd+0.1}
       % \xdefinecolor{MyColor}{rgb}{\pgfmathresult, \pgfmathresult, \pgfmathresult}
       \pgfmathtruncatemacro{\Xa}{random(1,5)}  % anstatt \pgfmathsetmacro damit es ein Integer ist und von ifnum verglichen werden kann
    \pgfmathtruncatemacro{\Xb}{random(1,5)}
    \pgfmathsetmacro{\Ya}{5*random()*1.5}
    \ifx \Xa\Xb 
     \else
	\draw [->, \mycol, thick] (\Xa,\Ya) -- (\Xb,\Ya); 
	\ifnum\Xa<\Xb
	\node[right] at (\Xa,\Ya + 0.15){$u_\y$};
        \else
	\node[left] at (\Xa,\Ya + 0.15){$u_\y$};
        \fi
    \fi
    };
};
% % random losses
% \foreach \y in {1,...,2}
% {  \pgfmathsetmacro{\Ya}{5*random()}
%   \pgfmathtruncatemacro{\Xa}{random(1,5)}
% 
%   \node[scale=1.5] at (\Xa,\Ya){$\bullet$};
% };
\node[scale=1.] at (2,2.3*1.5){$\bullet$};
 \node at (2.1,2.3*1.5){$\phantom{X}_{u_1}$};
\node[scale=1.] at (1,3.3*1.5){$\bullet$};
 \node at (1.1,3.3*1.5){$\phantom{X}_{u_3}$};
\node[scale=1.] at (4,3.9*1.5){$\bullet$};
 \node at (4.1,3.9*1.5){$\phantom{X}_{u_1}$};
% gain
 \node at (2,1.2*1.5){$\blacktriangledown$};
 \node at (2.1,1.2*1.5){$\phantom{X}_{u_2}$};

    \node at (0,-0.4){$u_1$};
    \node at (0,-0.8){$u_2$};
    \node at (0,-1.2){$u_3$};
    \node at (1,-0.4){\cmark};
    \node at (1,-0.8){\xmark};
    \node at (1,-1.2){\cmark};
    \node at (2,-0.4){\xmark};
    \node at (2,-0.8){\cmark};
    \node at (2,-1.2){\cmark};
    \node at (3,-0.4){\cmark};
    \node at (3,-0.8){\cmark};
    \node at (3,-1.2){\cmark};
    \node at (4,-0.4){\cmark};
    \node at (4,-0.8){\xmark};
    \node at (4,-1.2){\cmark};
    \node at (5,-0.4){\cmark};
    \node at (5,-0.8){\xmark};
    \node at (5,-1.2){\cmark};

    \node at (0,8.7){$u_1$};
    \node at (0,8.3){$u_2$};
    \node at (0,7.9){$u_3$};
    \node at (1,8.7){\cmark};
    \node at (1,8.3){\xmark};
    \node at (1,7.9){\cmark};
    \node at (2,8.7){\cmark};
    \node at (2,8.3){\xmark};
    \node at (2,7.9){\cmark};
    \node at (3,8.7){\cmark};
    \node at (3,8.3){\xmark};
    \node at (3,7.9){\cmark};
    \node at (4,8.7){\cmark};
    \node at (4,8.3){\xmark};
    \node at (4,7.9){\xmark};
    \node at (5,8.7){\cmark};
    \node at (5,8.3){\xmark};
    \node at (5,7.9){\xmark};
\end{tikzpicture}
\caption{\label{graphicmoran} The graphical representation of the
  Moran model of size $N=5$ from Definition \ref{def:moran}.  At thick
  arrows, the individual at the tip of the arrow is replaced by a copy
  of the individual at the tail. Three mechanisms are illustrated as
  follows: \newline 1. \emph{Gene loss} of gene $u$ is given at events
  $\bullet_u$; rate $\rho/2$ per gene per line.\newline 2. \emph{Gene
    gain} of gene $u$ is given at events $\blacktriangledown_u$; rate
  $\theta/2$ per line.\newline 3. \emph{Horizontal gene transfer} of
  gene $u$ from individual $i$ to $j$ is given through a thin arrow $i
  \!\!\xrightarrow{\mbox{ }u\mbox{ }} \!\!j$; rate $\gamma/(2N)$ per
  gene for every ordered pair $(i,j)$, indicating a potential HGT
  event.\newline Here we show only the events for genes $u_1,u_2$ and
  $u_3$. Presence and absence of these genes at the top gives rise
  through resampling, gene gain (only gene $u_2$), gene loss and HGT
  to presence and absence of the genes at the bottom.}
\end{SCfigure}

We consider the following model for bacterial evolution:
Each
bacterial cell carries a set of \emph{genes} and every gene belongs
either to the \emph{core genome} or to the \emph{accessory
  genome}. The uncountable set $I:=[0,1]$ is the space of conceivable
accessory genes.
% and will ensure that each gene is gained at most once.  One should
% think of $[0,1]$ not as the space of somewhere existing genes, but
% as the space of in principle conceivable genes.
In addition there is a set of persistent genes, the core genome -- see
also \cite{BaumdickerHessPfaffelhuber2011}.
% we also defined
%$\mathcal G_c$ with $\mathcal G_c \cap I = \emptyset$ as the core
%genome. 
As by definition these core gens can never be lost or gained and are
just present in all individuals we will ignore these genes in the
following analysis. A population of constant size consists of $N$
individuals, where each individual represents a (genome of a) bacterial cell which consists of several accessory genes.
 We model this accessory genome of
individual $i$ at time $t$ by a finite counting measure $\mathcal
G_i^N(t)$ on $I$. We will identify finite counting measures with the
set of atoms, i.e.\ we write $u\in \mathcal G_i^N(t)$ if $\langle
\mathcal G_i^N(t), 1_u\rangle \geq 1$. The dynamics of the model is
such that $\langle \mathcal G_i^N(t), 1_u\rangle \leq 1$ for all $i$
and $u\in[0,1]$, almost surely. In other words, there is at most one
copy of each gene in any individual.

% \begin{remark}
%   The Moran model has a graphical representation, where $N$
%   individuals are represented through lines labeled $\{1,\dots,N\}$.
%   Drawing a line from $-\infty$ to $0$ at each of these $N$ points,
%   each (ordered) pair $(i,j)$ of $\{1,\dots,N\}$ draw an arrow between
%   line $i$ and line $j$ at times given by a Poisson process with rate
%   1. Each arrow corresponds to a resampling event.  The individual at
%   the head of the arrow is replaced by a copy of the individual at the
%   tail of the arrow.  Put gene gain events on all lines at rate
%   $\theta/2$. If such an events occurs at a line a (uniform) gene $U
%   \in [0,1]$ is added to the individual sitting at this line.
%   Similarly add for each potential gene $u$ in $[0,1]$ along all lines
%   a potential gene loss event of gene $u$ at rate $\rho/2$. Finally
%   add for each $u \in [0,1]$ and for each ordered pair $(i,j)$ an
%   arrow marked with $u$ from line $i$ to line $j$ at rate
%   $\gamma/(2N)$.  These arrows correspond to potential horizontal gene
%   transfer events, where gene $u$ is added to individual $j$ if
%   individual $i$ carries the gene. Figure \ref{graphicmoran}
%   illustrates the graphical construction.
% \end{remark}

The population evolves according to Moran dynamics; see also
Figure~\ref{graphicmoran}. That is, time is continuous and every
(unordered) pair of individuals $\{i,j\}$ undergoes a resampling event
at rate~1. Here, in each resampling event between individuals~$i$
and~$j$, one bacterium is chosen at random ($i$, say), produces one
offspring which replaces the other individual~($j$ in this case) such
that the population size stays constant. The offspring carries the
same genes as the parent, i.e.\ if an offspring of~$i$ replaces~$j$ at
time~$t$, we have $\mathcal G_j^N(t) = \mathcal G_i^N(t-)$. In
addition to such resampling events, the following (independent) events
occur:

\begin{enumerate}
\item \emph{Gene loss}: For gene $u\in\mathcal G_i^N(t-)$ in
  individual~$i$, at rate~$\rho/2$, we have $\mathcal G_i^N(t) =
  \mathcal G_i^N(t-)\setminus\{u\}$, i.e.\ gene~$u$ is lost
  from~$\mathcal G_i^N(t)$.
\item \emph{Gene gain}: For every individual $i$, at rate $\theta/2$,
  choose $U$ uniform in $[0,1]$ and set $\mathcal G_i^N(t) = \mathcal
  G_i^N(t-)\cup\{U\}$, i.e.\ every individual gains an (almost surely)
  new gene at rate $\theta/2$.
\item \emph{Horizontal gene transfer}: For every (ordered) pair of
  individuals $(i,j)$ and $u\in\mathcal G_i^N(t)$, a horizontal gene
  transfer event occurs at rate $\gamma/(2N)$. For such an event, set
  $\mathcal G_j^N(t) = \mathcal G_j^N(t-)\cup\{u\}$ and $\mathcal
  G_i^N(t) = \mathcal G_i^N(t-) $, i.e.\ individual~$i$ is the
  \emph{donor} of gene~$u$ and transfers a copy of the gene $u$ to the
  \emph{recipient}~$j$.
\end{enumerate}
Horizontal gene transfer events can as well be written in the
measure-valued notation as $\mathcal G_j^N(t) = (\mathcal G_j^N(t-) +
\delta_u)\wedge 1$. The '$\wedge 1$'-term indicates that we do not
model paralogous genes, i.e.\ horizontal gene transfer events have no
effect if the recipient individual~$j$ already carries the transferred
gene.

\begin{definition}[Moran model with horizontal gene transfer]
  \label{def:moran}
  \sloppy We refer to $(\mathcal G_1^N(t), \dots, \mathcal
  G_N^N(t))_{t\geq 0}$ undergoing the above dynamics as the
  \emph{Moran model for bacterial genomes with horizontal gene flow}.
\end{definition}

\begin{lemma}[Equilibrium]
  \label{rem:equi}
  The Moran model of size $N$ for bacterial genomes with horizontal
  gene flow has a unique mixing, ergodic equilibrium. We denote random
  finite measures distributed according to this equilibrium by
  $\mathcal G_1^N:=\mathcal G_1^N(\infty),...,\mathcal G_N^N:=\mathcal
  G_N^N(\infty)$.
\end{lemma}

\begin{proof}
  First, existence of a stationary measure follows from tightness of
  the family $(\mathcal G_1^N(t),...,\mathcal G_N^N(t))_{t\geq 0}$. In
  order to see this, note that a single gene in frequency $k$ rises to
  $k+1$ at rate $(\tfrac 12+\tfrac \gamma {2N}) k(N-k)$ and decreases
  to $k-1$ at rate $\tfrac 12 k(N-k) - \tfrac \rho 2 k$. Denoting the
  hitting time of~0 of this birth-death process by $T$, we have that
  $\mathbb E_k[T]<\infty$ by positive recurrence of the birth-death
  process for all $k=1,...,N$. As a consequence, we can bound $\mathbb
  E\Big[\sum_{n=1}^N \langle \mathcal G_n^N(t), 1\rangle\Big]$ by
  contributions from the time-0 population and newly gained genes,
  i.e.\ by
  $$\sup_{t\geq 0}\mathbb
  E\Big[\sum_{n=1}^N \langle \mathcal G_n^N(t), 1\rangle\Big] \leq
  \sum_{n=1}^N \langle \mathcal G_n^N(0), 1\rangle + N^2 \tfrac \theta
  2 \sup_{t\geq 0} \int_0^t \mathbb P_1(T>t) dt < \infty,$$ which is
  enough for \sloppy tightness; see \cite{Kallenberg2002}, Lemma
  14.15. Any weak limit must be a stationary measure by standard
  arguments. Now, we show that $(\mathcal G_1^N(t),...,\mathcal
  G_N^N(t))_{t\geq 0}$, started in any stationary measure at time
  $-\infty$, is mixing. Indeed, there is a random, finite time $S$
  when all genes present at time $t=0$ have become lost. Now, the
  distribution of $(\mathcal G_1^N(t),...,\mathcal G_N^N(t))_{t\leq
    0}$ is independent of $(\mathcal G_1^N(t),...,\mathcal
  G_N^N(t))_{t\geq S}$, since the latter only depends on events in the
  graphical construction after $t\geq 0$.
\end{proof}

\begin{remark}[Diffusion limit]
  {\em In mathematical population genetics, one frequently studies
    models of finite populations forward in time, constructs their
    diffusion limit -- most often a Fleming--Viot measure-valued
    diffusion -- and only afterwards uses genealogical relationships
    in order to have a dual process to the Fleming--Viot
    measure-valued diffusion and to compute specific properties of the
    underlying forwards model.  Since our interest in the present
    paper lies in seeing the effects of horizontal gene transfer on
    summary statistics (see Theorems~\ref{T1}--\ref{T3}), we take
    another route here and leave the construction of the infinite
    model forwards in time for future research. Here, one would have
    to consider the set of counting measures on $[0,1]$ as a type
    space (which is locally compact), and define the current state of
    a finite population as the empirical measure of types on this
    state space. Constructing the diffusion limit then gives the
    measure-valued diffusion. We foresee two challenges in such a
    construction: (i) The corresponding {\it recombination operator}
    modeling HGT events for the limiting Fleming-Viot process is
    unbounded, since the number of genes in a genome is unbounded;
    (ii) Although we will give a genealogical construction of HGT in
    Section~\ref{S:AGTG}, it is not straight-forward to interpret the
    resulting graph as a dual process. }
\end{remark}

\begin{remark}[Gene transfer of more than one gene]
  {\em Note that we model only the transfer of DNA segments too small for
  more than one gene, although it is known that the transfer of
  multiple genes at once can occur \citep{Price2008}.  However, we
  postulate that the results of Theorem~\ref{T1} and Theorem~\ref{T2}
  are as well valid for a model with multiple gene transfers and
  multiple gene losses, where at rate $\gamma'$ ($\rho'$) each gene of
  an individual is transfered (lost) with some probability $p_\gamma$
  ($p_\rho$).
  % In this case we have to replace $\gamma$ and $\rho$ in
  % Theorem~\ref{T1} and Theorem~\ref{T2} by $\gamma'p_\gamma$ and
  % $\rho'p_\rho$, respectively.
  In this case, first moments of the statistics we compute in
  Theorems~\ref{T1} and~\ref{T2}, as well as Lemma~\ref{l:diff}, are
  not affected if we replace $\gamma$ and $\rho$ by $\gamma' p_\gamma$
  and $\rho' p_\rho$.  However, our results for second moments in
  Theorem~\ref{T3} differ in the case of multiple gene transfer/loss
  events.}
\end{remark}

~

\noindent
We are mainly interested in large populations. The corresponding limit
is usually referred to as large population limit in the population
genetic literature. The following result of the Moran model
with HGT will already be useful in various applications.

\begin{lemma}[The frequency path of a single gene]
  \label{l:diff}
  Let $X^N(t)$ be the frequency of gene $u$ at time $t$ in the Moran
  model for bacterial genomes with horizontal gene flow of size $N$
  with $X^N(0)$ such that $X^N(0)\xrightarrow{N\to\infty}x$. Then, in
  the large population limit, $N\to\infty$, the process
  $(X^N(t))_{t\geq 0}$ converges weakly to the solution of the SDE
  \begin{align}\label{eq:SDE}
    dX = \big(-\tfrac\rho 2 X + \tfrac \gamma 2 X(1-X) \big)dt +
    \sqrt{X(1-X)} dW
  \end{align}
  with $X(0)=x$ for some Brownian motion $W$.
\end{lemma}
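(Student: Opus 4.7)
The plan is to realize $X^{N}$ as a continuous-time Markov chain on the grid $\{0,1/N,\dots,1\}$, read off its jump rates from the three mechanisms defining the Moran model, compute the infinitesimal mean and variance, and then invoke a standard diffusion-approximation theorem (e.g.\ Theorem~7.4.1 in Ethier--Kurtz) to get weak convergence to the solution of \eqref{eq:SDE}.

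First I would fix the reference gene $u$ and let $K^{N}(t)=NX^{N}(t)$ denote the number of individuals carrying $u$. Going through the four mechanisms at state $K^{N}=k$:
\begin{itemize}
\item Resampling contributes $k\to k+1$ and $k\to k-1$ each at rate $k(N-k)/2$, since among ordered pairs $(i,j)$ with $i\neq j$, offspring of $i$ replace $j$ at rate $1/2$, and this changes $K^{N}$ only when exactly one of $i,j$ carries $u$.
\item Gene loss contributes $k\to k-1$ at rate $k\rho/2$.
\item Gene gain does not affect $K^{N}$: since the uniform pick $U\in[0,1]$ equals the fixed atom $u$ with probability $0$, the specific gene $u$ is almost surely never introduced by a gain event.
\item HGT contributes $k\to k+1$ at rate $k(N-k)\gamma/(2N)$, counting ordered pairs $(i,j)$ with $i$ carrying $u$, $j$ not, each firing at rate $\gamma/(2N)$.
\end{itemize}

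From this, writing $x=k/N$, the infinitesimal mean and variance of $X^{N}$ are
\begin{align*}
b_{N}(x) &= \tfrac{1}{N}\bigl[\tfrac{k(N-k)\gamma}{2N}-\tfrac{k\rho}{2}\bigr] = -\tfrac{\rho}{2}x+\tfrac{\gamma}{2}x(1-x),\\
a_{N}(x) &= \tfrac{1}{N^{2}}\bigl[k(N-k)+\tfrac{k(N-k)\gamma}{2N}+\tfrac{k\rho}{2}\bigr] = x(1-x)+O(1/N),
\end{align*}
and both converge to the coefficients appearing in \eqref{eq:SDE} uniformly in $x\in[0,1]$. Since the jumps of $X^{N}$ are bounded by $1/N\to 0$, the classical diffusion-approximation theorem applies: the generators $A_{N}f(x)=\sum_{\pm}(\text{rate})[f(x\pm 1/N)-f(x)]$ of $X^{N}$ converge, on smooth test functions $f\in C^{2}([0,1])$, to the generator $A f(x)=\tfrac12 x(1-x)f''(x)+(-\tfrac{\rho}{2}x+\tfrac{\gamma}{2}x(1-x))f'(x)$ of the diffusion in \eqref{eq:SDE}. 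Combined with the assumption $X^{N}(0)\to x$, this yields weak convergence on path space to the martingale problem for $A$.

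Finally I would note that the SDE \eqref{eq:SDE} has a unique strong solution on $[0,1]$: the drift is Lipschitz and the diffusion coefficient $\sqrt{x(1-x)}$ is H\"older of exponent $1/2$ and vanishes at the boundary, so pathwise uniqueness follows from a Yamada--Watanabe argument (as for standard Wright--Fisher). Thus the martingale problem is well posed and the limit is unambiguously identified. The only point requiring real care is the bookkeeping of the HGT rate: because each ordered pair fires at rate $\gamma/(2N)$, the total HGT rate seen by $u$ is $O(N)$ and therefore contributes to the drift rather than to the quadratic variation — that is precisely the scaling that produces the logistic-type term $\tfrac{\gamma}{2}X(1-X)$ in \eqref{eq:SDE}. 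Everything else is routine.
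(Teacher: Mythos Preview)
Your proposal is correct and follows essentially the same route as the paper: enumerate the jump rates of the Markov chain $X^N$, show that the generator converges on $C^2([0,1])$ to $Af(x)=\tfrac12 x(1-x)f''(x)+(-\tfrac{\rho}{2}x+\tfrac{\gamma}{2}x(1-x))f'(x)$, and invoke a standard diffusion-approximation result (the paper cites Ewens rather than Ethier--Kurtz). Your write-up is in fact slightly more careful than the paper's, since you explicitly observe that gene gain events do not affect the fixed gene $u$ and you address well-posedness of the limiting SDE via Yamada--Watanabe.
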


\begin{remark}[The diffusion~\eqref{eq:SDE} in population genetics]
  {\em The diffusion~\eqref{eq:SDE} also appears in population genetics
  models including selection (see e.g.\ \cite{Kimura1964},
  \cite[chapt. 5.3]{Ewens2004}, \cite[chapt. 7.2]{Durrett2008}). In
  the present setting, the term proportional to $X(1-X)dt$ appears
  because horizontal gene flow increases the frequency of the gene by
  a rate which is proportional to the number of possible
  donor/recipient-pairs of individuals; see
  also~\cite{Bonhoeffer2013}.

  Due to the close connection of horizontal gene transfer with
  selective models, a comparison to recent work is appropriate. In
  particular, the theory for the frequency spectrum in selective
  models with irreversible mutations is carried out in
  \cite{Fisher1930,Wright1938,Kimura1964,Kimura1969}. We re-derive
  these results in our proof of Theorem~\ref{T1} below, but we stress
  that the genealogical interpretation we give is derived with a
  special focus on horizontal gene flow, but not to the selective
  case.}
\end{remark}

\begin{proof}[Proof of Lemma~\ref{l:diff}]
  As in the proof of Lemma~\ref{rem:equi}, note that gene loss reduces
  $X^N$ with rate $\tfrac \rho 2 N X^N$. Second, horizontal gene
  transfer increases $X^N$ with rate $\tfrac{\gamma}{2N}N^2
  X^N(1-X^N)$. By construction, the evolution of frequencies of gene
  $u$ is a Markov process with generator
  \begin{align*}
    (G^Nf)(x) & = N(N-1)x(1-x)\Big(\frac 12 f(x+1/N) + \frac 12
    f(x-1/N) - f(x)\Big) \\ & \qquad - \frac{\rho Nx}{2}
    (f(x-1/N)-f(x)) + \frac{\gamma N^2x(1-x)}{2N}(f(x+1/N)-f(x)) \\ &
    \xrightarrow{N\to\infty} \frac 12 x(1-x) f''(x) +(- \frac\rho 2 x
    + \gamma x(1-x))f'(x)
  \end{align*}
  for $f\in\mathcal C^2([0,1])$. Using e.g.\ standard results
  from~\cite[chapt. 4]{Ewens2004} it is now easy to show weak
  convergence to the diffusion~\eqref{eq:SDE}.
\end{proof}

\section{Results on Summary Statistics}
\label{S:results}
Consider a sample $\mathcal G_1^N,\dots,\mathcal G_n^N$ of size $n$
taken from the Moran model of size $N$ in equilibrium. We introduce
several statistics under the above dynamics:
\begin{itemize}
\item The \emph{average number of genes (in the accessory genome)} of the sampled $n$ individuals is
  given by
  \begin{align}\label{eq:Hbar}
    A^{(n)} := A^{(n,N)} := \dfrac 1n \sum_{i=1}^n |\mathcal G_i^N|
  \end{align}
  where $ |\mathcal G_i^N| := \langle \mathcal G_i^N, 1\rangle$ is the
  total number of accessory genes in individual~$i$.
\item The \emph{average number of symmetric pairwise differences} is given by
  \begin{align}\label{eq:Dbar}
    D^{(n)} := D^{(n,N)} := \dfrac 1{n(n-1)} \sum_{1\leq i \neq j\leq n}
    |\mathcal G_i^N \setminus \mathcal G_j^N|
  \end{align}
  where $ \mathcal G_i^N \setminus \mathcal G_j^N := (\mathcal G_i^N -
  \mathcal G_j^N)^+$ are the genes present in $i$ but not in $j$.
\item The \emph{size of the accessory genome} of the sample is given by
  \begin{align}\label{eq:G}
    G^{(n)}:=G^{(n,N)}:=\Big| \bigcup_{i=1}^n \mathcal G_i^N \Big|
  \end{align}
  where $ \bigcup_{i=1}^n \mathcal G_i^N = \Big(\sum_{i=1}^n \mathcal
  G_i^N\Big) \wedge 1$ is the set of genes present in any individual
  from the sample, counting each gene only once no matter in how many individuals it is present.
\item The \emph{gene frequency spectrum (of the accessory genome)} is
  given by $G_1^{(n)}:=G^{(n,N)}_1,\dots,G_n^{(n)}:=G^{(n,N)}_n$,
  where
  \begin{align}\label{eq:Gk}
    G_k^{(n)} := G_k^{(n,N)} := |\{u\in I: u\in\mathcal G_i^N \text{
      for exactly } k \text{ different }i\}|.
  \end{align}
\end{itemize}

\begin{remark}[Notation]
  {\em In the following results, we will suppress the superscript~$N$ of
  the population size. Instead, we query that our results hold in the
  \emph{large population limit}. E.g.\ if we say that~\eqref{eq:7}
  holds in the large population limit, we really mean that
  $$ \mathbb E[A^{(n,N)}] \xrightarrow{N\to\infty} \frac{\theta}{\rho} \left( 1 +
    \sum\limits_{m=1}^{\infty} \frac{\gamma^m}{(1+\rho)_{m \uparrow}}
  \right).$$}
\end{remark}

~

\noindent
The proofs of all results presented here are given in
Section~\ref{S:proofs}. For first moments, we provide proofs using
diffusion theory and Lemma~\ref{l:diff}. For second moments, we rely
on the Ancestral Gene Transfer Graph (AGTG) of
Section~\ref{S:AGTG}. Since the proofs of the results are either using
Lemma~\ref{l:diff} or the AGTG or both, we formulate the following
three Theorems. 

\begin{theorem}[Gene frequency spectrum]
  \label{T1} Consider a sample of size $n$ taken from the Moran model
  for bacterial genomes with horizontal gene flow with $\rho>0,
  \theta>0, \gamma\geq 0$ in equilibrium. Then, in the large
  population limit, it holds that
  \begin{align}\label{eq:T1}
    \mathbb E[ G_k^{(n)} ] & = \frac{\theta}{k} \frac{(n)_{k \downarrow}}{(n-1+\rho)_{k \downarrow}} \Big(1 +
    \sum\limits_{m=1}^{\infty} \frac{(k)_{m \uparrow}
      \gamma^m}{(n+\rho)_{m \uparrow} m!}\Big)
  \end{align}
  with $ (a)_{b \uparrow} := a (a+1) \cdots (a+b-1)$ and $(a)_{
    b \downarrow} := a (a-1) \cdots (a-b+1)$.
\end{theorem}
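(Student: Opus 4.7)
The plan is to reduce $\mathbb{E}[G_k^{(n)}]$ to an integral against the equilibrium intensity of the one-gene diffusion supplied by Lemma~\ref{l:diff}, and then evaluate this integral by expanding the exponential and using Beta-function identities. In the large-population limit each gene's frequency evolves, by Lemma~\ref{l:diff}, as a copy of the diffusion \eqref{eq:SDE}, while fresh genes are introduced at frequency $0^+$ at total rate $N\theta/2$ (i.e.\ $\theta/2$ per individual). Let $f(x)\,dx$ denote the equilibrium expected number of accessory genes whose frequency lies in $(x,x+dx)$. Since a gene of population frequency $x$ is present in exactly $k$ out of $n$ individuals drawn from a large population with (asymptotic) probability $\binom{n}{k}x^k(1-x)^{n-k}$, a first-moment argument gives
\begin{align*}
\mathbb{E}[G_k^{(n)}] = \binom{n}{k}\int_0^1 x^k(1-x)^{n-k}\, f(x)\, dx.
\end{align*}

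The crux is to identify
\begin{align*}
f(x) = \theta\,\frac{(1-x)^{\rho-1}e^{\gamma x}}{x},\qquad x\in(0,1).
\end{align*}
For \eqref{eq:SDE}, with drift $\mu(x)=-\tfrac\rho2 x+\tfrac\gamma2 x(1-x)$ and variance $\sigma^2(x)=x(1-x)$, I compute the scale and speed densities $s'(x)=(1-x)^{-\rho}e^{-\gamma x}$ and $m(x)=2(1-x)^{\rho-1}e^{\gamma x}/x$. A direct verification shows that the Fokker--Planck flux $\mu(x)f(x)-\tfrac12(\sigma^2(x)f(x))'$ vanishes identically on $(0,1)$, so the displayed $f$ is (up to the constant $\theta$) proportional to the speed density. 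The normalization $\theta$ is fixed by the immigration balance: each fresh gene enters at frequency $1/N$ and the prelimit influx rate per population is $N\theta/2$, which together with $s(1/N)-s(0)\sim 1/N$ in the Green-function calculation yields the coefficient $\theta$ in front and the singularity $f(x)\sim\theta/x$ at $0$. As independent cross-checks, $\gamma=0$ recovers the formula of \cite{BaumdickerHessPfaffelhuber2010}, and $\int_0^1 x f(x)\,dx$ reproduces the limiting value of $\mathbb{E}[A^{(n,N)}]$ displayed in the Notation remark.

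Substituting $f$ into the sampling formula gives
\begin{align*}
\mathbb{E}[G_k^{(n)}] = \binom{n}{k}\theta\int_0^1 x^{k-1}(1-x)^{n-k+\rho-1}e^{\gamma x}\,dx.
\end{align*}
I would then expand $e^{\gamma x}=\sum_{m\ge 0}(\gamma x)^m/m!$ and integrate term by term to produce $\binom{n}{k}\theta\sum_{m\ge 0}\gamma^m B(k+m,n-k+\rho)/m!$. The ratio $B(k+m,n-k+\rho)/B(k,n-k+\rho)=(k)_{\bar m}/(n+\rho)_{\bar m}$ factors out the $m$-dependence, while the elementary identity $\binom{n}{k}B(k,n-k+\rho)=(n)_{\underline k}/[k(n-1+\rho)_{\underline k}]$ produces the prefactor in \eqref{eq:T1}.

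The main obstacle is making rigorous the step from Lemma~\ref{l:diff} (a single-gene diffusion limit) to the claim that the equilibrium point process of all accessory-gene frequencies has intensity $f$ of the asserted form. The delicate point is that new genes enter at frequency $1/N\to 0$, which is precisely the absorbing boundary of the limiting diffusion; one cannot directly apply a Green-function formula on the limit but must retain the prelimit scaling of the gain rate $N\theta/2$ against the starting frequency $1/N$ in order to obtain a finite, non-trivial intensity. Once $f$ is in hand, the remaining steps are routine manipulations of Beta integrals and rising/falling factorials.
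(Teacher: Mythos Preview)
Your proposal is correct and follows essentially the same route as the paper: both derive the equilibrium intensity $f(x)=\theta\,(1-x)^{\rho-1}e^{\gamma x}/x$ from the single-gene diffusion of Lemma~\ref{l:diff}, integrate it against the binomial sampling kernel $\binom{n}{k}x^k(1-x)^{n-k}$, and reduce the resulting integral to a ${}_1F_1$-series via Beta functions. The only cosmetic difference is that the paper phrases the derivation of $f$ through the Green function and the Durrett-style injection of new genes at rate $\tfrac{\theta}{2}\,\phi(\delta)^{-1}$, whereas you phrase it via the speed measure and a Fokker--Planck/flux-balance argument; these are two standard and equivalent ways to obtain the same density, and your honest caveat about the boundary-at-$0$ normalization is exactly the point the paper handles by the $\phi(\delta)$ device.
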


\begin{theorem}[More sample statistics]
  \label{T2} Under the same assumptions as in Theorem~\ref{T1},
  \begin{align}
    \label{eq:7}\mathbb E[A^{(n)}] &= \frac{\theta}{\rho} \left( 1 +
      \sum\limits_{m=1}^{\infty} \frac{\gamma^m}{(1+\rho)_{m \uparrow}} \right),\\
    \label{eq:8}\mathbb E[D^{(n)}] &= \frac{\theta}{1 + \rho} \left( 1 +
      \sum\limits_{m=1}^{\infty} \frac{\gamma^m}{(2+\rho)_{m \uparrow} } \right),\\
    \label{eq:9}\mathbb E[G^{(n)}] &= \theta
    \sum_{k=0}^{n-1}\frac{1}{k+\rho} + \theta \sum_{m=1}^\infty
    \frac{\gamma^{m}}{m} \Big(\frac{1}{(\rho)_{m \uparrow}} -
    \frac{1}{(n+\rho)_{m \uparrow}}\Big)
    % \\ &\sum\limits_{k=1}^n \frac{\theta}{k -1 +
    % \rho} \left( 1 + \sum\limits_{m=1}^{\infty}
    %   \frac{\gamma^m}{(k+\rho)_m } \right).
  \end{align}
  in the large population limit.
\end{theorem}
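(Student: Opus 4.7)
The plan is to reduce each of $\mathbb{E}[A^{(n)}]$, $\mathbb{E}[D^{(n)}]$, $\mathbb{E}[G^{(n)}]$ to $(\theta/2)\, u_f'(0)$ for an appropriate test function $f:[0,1]\to\mathbb{R}$ with $f(0)=0$, where $u_f$ solves the boundary-value problem $Lu_f=-f$, $u_f(0)=0$, with $L$ the generator of the SDE \eqref{eq:SDE}. The reduction is a Palm/Little's-law argument: at equilibrium new genes enter at total rate $N\theta/2$, each at frequency $1/N$, and evolve independently via (the prelimit of) \eqref{eq:SDE} until absorption at $0$, so
\begin{equation*}
  \mathbb{E}\Bigl[\sum_{u \in \bigcup_i \mathcal{G}_i^N} f(X_u)\Bigr]
  \;=\; \tfrac{N\theta}{2}\,\mathbb{E}^{1/N}\!\Bigl[\int_0^{\infty} f(X_t)\,dt\Bigr]
  \;\xrightarrow{N\to\infty}\; \tfrac{\theta}{2}\, u_f'(0),
\end{equation*}
using Lemma~\ref{l:diff} and the smoothness of $u_f$ at $0$ (so that $u_f(y)=u_f'(0)\,y+O(y^2)$).

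Exchangeability yields $\mathbb{E}[A^{(n)}]=\mathbb{E}[|\mathcal{G}_1^N|]$ and $\mathbb{E}[D^{(n)}]=\mathbb{E}[|\mathcal{G}_1^N\setminus\mathcal{G}_2^N|]$; conditionally on a gene having population frequency $x$, the relevant indicator probabilities are $x$ (for $A$), $x(1-x)$ (for $D$), and $1-(1-x)^n$ (for $G$). One therefore applies the reduction with $f(x)=x,\, x(1-x),\, 1-(1-x)^n$ in turn. Writing $f(x)/x=\sum_{k\in K}(1-x)^k$ with $K=\{0\},\{1\},\{0,\dots,n-1\}$ in these three cases, substituting $y=1-x$ and $v(y):=u_f(1-y)$ into $Lu_f=-f$, and setting $w:=v'$, the ODE reduces to the first-order linear equation
\begin{equation*}
  y\,w'(y)+(\rho-\gamma y)\,w(y) \;=\; -2\sum_{k\in K} y^k.
\end{equation*}
Multiplying by the integrating factor $y^\rho e^{-\gamma y}$ and integrating from $0$ to $1$ (the regularity of $u_f$ at $x=1$ forces $y^\rho e^{-\gamma y}w(y)\to 0$ as $y\to 0$ since $\rho>0$), together with $u_f'(0)=-w(1)$, gives
\begin{equation*}
  u_f'(0) \;=\; 2 e^\gamma \sum_{k\in K}\int_0^1 r^{\rho+k-1}e^{-\gamma r}\,dr.
\end{equation*}

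To put this in the form of~\eqref{eq:7}--\eqref{eq:9}, expand $e^{-\gamma r}$ and $e^\gamma$ as power series in $\gamma$ and apply the classical Beta-integral identity $\sum_{j=0}^m\binom{m}{j}(-1)^j/(a+j)=m!/(a)_{\overline{m+1}}$, which yields $\theta\,e^\gamma\int_0^1 r^{\rho+k-1}e^{-\gamma r}\,dr = \theta\sum_{m\ge 0}\gamma^m/(\rho+k)_{\overline{m+1}}$. For $k=0$ this is~\eqref{eq:7}, for $k=1$ it is~\eqref{eq:8}, and for $K=\{0,\dots,n-1\}$ the telescoping identity $(\rho+k)_{\overline{m+1}}^{-1}=m^{-1}[(\rho+k)_{\bar m}^{-1}-(\rho+k+1)_{\bar m}^{-1}]$ collapses the sum over $k$ into the closed form~\eqref{eq:9}.

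The main technical obstacle is the passage to the diffusion limit in the Palm identity of the first paragraph, i.e.\ showing $N\cdot\mathbb{E}^{1/N}[\int_0^\infty f(X_t^N)\,dt]\to u_f'(0)$. This amounts to convergence of the prelimit Green function evaluated near $0$ (after the natural rescaling) to the derivative at $0$ of the diffusion Green function, which follows from Lemma~\ref{l:diff} combined with a uniform-integrability estimate; the smoothness of $u_f$ at $0$ for each of the three test functions (ensured by $f(0)=0$ and $f$ being polynomial) means no pathology arises at the source of new genes.
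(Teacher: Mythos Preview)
Your argument is correct and is essentially the paper's diffusion-theoretic route, streamlined. The paper gives two proofs. Its first proof goes through Theorem~\ref{T1}: it derives the equilibrium frequency density $\theta\, e^{\gamma x}/(x(1-x)^{1-\rho})$ via the Green function of~\eqref{eq:SDE} (with the same ``new genes enter at frequency $\delta$'' Palm-type argument you use, citing Durrett), obtains $\mathbb{E}[G_k^{(n)}]$ by integrating against $\binom{n}{k}x^k(1-x)^{n-k}$, and then combines these for $A,D,G$. Your ODE $Lu_f=-f$ and the resulting formula $u_f'(0)=2e^\gamma\sum_{k\in K}\int_0^1 r^{\rho+k-1}e^{-\gamma r}\,dr$ is the \emph{same} integral: substituting $r=1-x$ into $\int_0^1 \theta\, e^{\gamma x}(1-x)^{\rho-1}\cdot(1-x)^k\,dx$ gives exactly your expression. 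So you have bypassed the intermediate gene-frequency-spectrum step but are computing the identical object; the telescoping sum for $G^{(n)}$ is also the same. Your acknowledged ``technical obstacle'' (the $N\to\infty$ limit in the Palm identity) is handled in the paper at the same level of rigor, by reference to standard diffusion theory.

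The paper's second proof is genuinely different: it uses the Ancestral Gene Transfer Graph and shows that $\mathbb{E}[G^{(n)}]=(\theta/2)\,\mathbb{E}[L(\mathcal{A}^n)]$, where $L(\mathcal{A}^n)/2$ is the absorption time of a birth--death chain with birth rate $\gamma$ and death rate $i-1+\rho$; the classical formula for expected hitting times then yields the same series. This approach is more model-specific but avoids the diffusion-limit passage entirely and, more importantly, is the machinery that the paper needs for the second-moment results in Theorem~\ref{T3}, where the single-gene diffusion picture no longer suffices because one must track correlations between two genes via coupled graphs.
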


\begin{remark}[Behavior of~\eqref{eq:T1}--\eqref{eq:9}]
  {\em The infinite sums in \eqref{eq:T1} -- \eqref{eq:9} are all
    finite as can be seen by a comparison with the exponential
    series. Note further that \eqref{eq:7} and \eqref{eq:8} do not
    depend on the sample size $n$, while \eqref{eq:9} shows a
    nontrivial dependence on the sample size: The size of the
    accessory genome grows logarithmically with $n$ if $n$ is large
    enough.}
\end{remark}

\begin{theorem}[Second moment of the number of genes]
  \label{T3} Under the same assumptions and in the large population
  limit as in Theorem~\ref{T1}, we have, in the limit $\gamma\to 0$,
  \begin{align}
    \label{eq:T3q}
      \mathbb V[A^{(1)}] & = \frac{\theta}{\rho} \Big( 1 +
      \frac{1}{1+\rho}\gamma + \Big(\frac{1}{(1+\rho)(2+\rho)} \\ &
      \qquad \qquad \qquad + \notag
      \frac{\theta}{(1+\rho)^2(3+2\rho)(2+7\rho+6\rho^2)}\Big)\gamma^2
      \Big) + \mathcal O(\gamma^3),\\
    \label{eq:T3e}
      \mathbb V[D^{(2)}] & = \frac{\theta}{1+\rho} \Big( \frac 12 +
      \frac{\theta}{(1+\rho)(1+2\rho)} + \Big( \frac{1}{2(2+\rho)} \\
      & \quad + \theta \frac{2 (12 + 110 \rho + 248 \rho^2 + 209
        \rho^3 + 60 \rho^4)}{(1 + \rho) (2 + \rho) (1 + 2 \rho)^2 (3 +
        2 \rho) (2 + 3 \rho) (6 + 5 \rho)}\Big) \gamma \Big) +
      \mathcal O(\gamma^2).\notag
  \end{align}
\end{theorem}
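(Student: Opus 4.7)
My plan is to use the AGTG construction of Section~\ref{S:AGTG}, which gives the large-population-limit joint law of gene presences in the sample. For each of the two statistics I express the variance through a second factorial moment, namely
\[ \mathbb V[A^{(1)}] = \mathbb E[A^{(1)}(A^{(1)}-1)] + \mathbb E[A^{(1)}] - \mathbb E[A^{(1)}]^2 \]
and, since $D^{(2)}=G_1^{(2)}/2$, the analogous decomposition of $\mathbb V[G_1^{(2)}]$. The first moments on the right are supplied by Theorems~\ref{T1} and~\ref{T2}, so the new work is to compute, in the large-population limit, the joint probability that two generic distinct genes $u,v$ occur in the required configuration (both in individual~$1$ for $A^{(1)}$; each in exactly one of the two sampled individuals for $D^{(2)}$).

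Such joint probabilities are read off from the AGTG run for two independent gene labels over the sample genealogy. Each gene label is born on any ancestral lineage at Poisson rate $\theta/2$, each gene-specific lineage branches backward in time at rate $\gamma/2$ (HGT, producing a donor ancestor), the gene on any lineage is erased forward in time by loss at rate $\rho/2$, and pairs of lineages (belonging to the same gene or to different genes) coalesce at rate $1$ per pair. The expansion to the orders stated in \eqref{eq:T3q} and \eqref{eq:T3e} corresponds to truncating the number of HGT branch events in the two AGTGs; for each joint topology one writes down the integral over the exponentially distributed inter-event times and obtains an explicit rational function of $\rho$ multiplied by a power of $\theta$ equal to the number of independent gain events that the topology requires.

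For $\mathbb V[A^{(1)}]$ the $\gamma^0$ topology has no HGT events, so both gene AGTGs coincide with individual~$1$'s single ancestral lineage; the two gain processes along this lineage are independent and $\mathbb P(u,v\in\mathcal G_1)$ factorises, yielding the Poisson value $\mathbb V = \mathbb E = \theta/\rho$. At order $\gamma^1$ only one of the gene AGTGs carries an extra branch, and a direct AGTG computation shows that the factorial moment still coincides with $\mathbb E[A^{(1)}]^2$ up to this order, so $\mathbb V-\mathbb E$ remains zero at $\gamma^1$. The first genuine excess appears at $\gamma^2$, from topologies in which both gene AGTGs have an extra lineage and these lineages share an ancestor via coalescence; this joint structure is precisely what produces the $\theta^2$ correction in \eqref{eq:T3q}. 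The argument for $\mathbb V[D^{(2)}]$ is the same on the two-lineage sample coalescent and only needs the expansion up to order $\gamma$.

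The main obstacle is combinatorial rather than conceptual: at each order in $\gamma$ one must enumerate all joint AGTG topologies (including coalescences between lineages of the two different genes and with the sample lineages), weight them with the right rates, and integrate out the inter-event exponentials. These integrals produce the denominators $(3+2\rho)(2+7\rho+6\rho^2)$ and $(1+\rho)(2+\rho)(1+2\rho)^2(3+2\rho)(2+3\rho)(6+5\rho)$ visible in \eqref{eq:T3q}--\eqref{eq:T3e}; keeping track of signs and multiplicities in the enumeration is what restricts the expansion to low orders in $\gamma$.
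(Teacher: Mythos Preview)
Your proposal is correct and follows the same overall strategy as the paper: use the AGTG for a pair of genes, split the variance into a Poisson part equal to the mean and a covariance part coming from two distinct gene labels, and expand the latter in powers of $\gamma$ by restricting the number of HGT splits. Your factorial-moment decomposition is exactly the paper's decomposition
\[
\mathbb V[A^{(1)}] = \int_0^1 \mathbb V[\mathcal G_1(dx)] + \int_0^1\!\!\int_0^1 1_{x\neq y}\,\mathbb{COV}[\mathcal G_1(dx),\mathcal G_1(dy)]
\]
rewritten, and your observation that the cross term vanishes at orders $\gamma^0,\gamma^1$ matches the paper's $\mathbb{COV}[L(\mathcal A^1),L(\mathcal A^2)]=\mathcal O(\gamma^2)$.

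The one methodological difference worth noting is how the two-gene cross term for $A^{(1)}$ is actually computed. You propose enumerating all joint AGTG topologies with at most two splits and integrating the exponentials. The paper instead sets up a first-step (time-derivative) recursion for $\mathbb E_{xyz}[L_1L_2]$, where $(x,y,z)$ counts lines of type ``only gene~1'', ``both genes'', ``only gene~2''; this produces a small linear system in the six or seven reachable states and yields the $\gamma^2$ coefficient without listing topologies. For $D^{(2)}$ the paper does exactly what you propose---brute-force enumeration over the shape space $\mathcal S_{\gamma^2}$, carried out by computer algebra because there are several thousand shapes. So your plan works for both statements, but for $\mathbb V[A^{(1)}]$ the paper's recursive approach is tidier and would scale to higher orders in $\gamma$ more gracefully than direct enumeration.
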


\begin{remark}[Higher order terms in~\eqref{eq:T3q}
  and~\eqref{eq:T3e}]
    {\em Although computationally intensive, it should be straightforward
    to improve the approximations in Theorem \ref{T3} for small
    $\gamma$ to higher orders $\mathcal O(\gamma^n)$. In our proof, we
    use an ancestral perspective which includes up to two HGT events,
    leading to the order $\gamma^2$. Including more than two HGT
    events will result in higher order terms, but will lead to an
    increasing amount of genealogies which must be considered. For the
    second order result in \eqref{eq:T3e}, we had to consider more
    than~5000 genealogies.
    % , using the same techniques as in the proof.  For large
    % $\gamma$ the situation is more complicated. If in the AGTG for a
    % single gene the number of lineages is below (above) $n^\star =
    % \gamma +1 - \rho$ the number of lineages tends to rise (fall).
    % Thus for small $\gamma$ the AGTG will end up much faster with zero
    % lineages than for large $\gamma$. As long as $\rho$ is not large
    % the AGTG will spend a lot of time at $\gamma+1$ lineages.  Thus in
    % a similar approach to approximate the second moments of $A^{(1)}$
    % and $D^{(2)}$ the number of relevant shapes in $\mathcal S$ would
    % increase dramatically to an unworkable amount.
  }
\end{remark}

% DAS WERDEN WIR NICHT BEKOMMEN
% \begin{theorem}[variance of the Gene frequency spectrum]
%   Consider a sample of size $n$ taken from the
%   Wright--Fisher model for bacterial genomes with horizontal gene flow
%   with $\rho>0, \theta>0, \gamma\geq 0$ in equilibrium. Then, as
%   $N\to\infty$,
%   \begin{align}\label{eq:T1}
%     \mathbb Var[ G_k^{(n)} ] &= ??
%   \end{align}
% \end{theorem}

\noindent
Before we prove our Theorems, let us give some biological implications
and relations to previous work in the biology literature.

\section{Discussion: Biological Implications}
\label{sec:disc}
Unraveling the amount of HGT shaping bacterial diversity can today be
tackled using a growing amount of genomic data.  In particular,
several datasets from closely related strains, which are of the same
bacterial species are available today
\citep{Medini2005,Tettelin2005,Tettelin2008}.  In such datasets, genes
present in all genomes of a taxon are called \emph{core genes} while
genes present in only some but not all individuals comprise the
\emph{accessory genome}. The latter set of genes is further split into
the medium-frequency \emph{shell} of genes and the \emph{cloud} of
genes of low frequency \citep{Koonin2008}.

~

HGT comes in two flavors, either between or within populations. As for
HGT between populations, a variant of the infinitely many genes model
from \cite{BaumdickerHessPfaffelhuber2010} was introduced by
\cite{Haegeman:2012:BMC-Genomics:22613814}, who couple gene gain and
loss events in order to obtain a genome of constant size. However,
this is in contrast to available data, since flexible genomes of
bacteria usually come with different genome sizes.  An interesting
extension of the infinitely many genes model was studied in
\cite{Collins:2012:Mol-Biol-Evol:22752048}; see also
\cite{Lobovsky2013}. Here, different random trees, including the
coalescent tree, were used as underlying genealogies as well as
different classes of genes, each class with its own rate of gene gain
and loss. It was found that the coalescent produces a good fit with
data, and it is likely that the rate of gene gain and loss depends on
the gene.

In contrast to the vast amount of available data on HGT in bacteria,
mathematical models for HGT within a population are hardly
available. A first approach of the population genomics of bacteria was
made by \cite{Novozhilov:2005:Mol-Biol-Evol:15901840}, extending a
model from \cite{Berg:2002:Mol-Biol-Evol:12446817}. Here, a birth and
death process is used in order to describe the evolution of the
frequency of a single gene under selection under within-population
horizontal transmission (‘‘infection’’), mutation (leading to loss of
the gene) and population size changes. However, this study is limited
since only a single gene is considered, but bacterial genomes are
comprised of several hundreds of genes, each of which may be under
selection and horizontal gene transfer. In
\cite{Mozhayskiy:2012:BMC-Bioinformatics:22759418}, a simulation study
was carried out, taking selective forces into account which arise from
gene regulatory networks, i.e.\ epistasis of presence and absence of
genes. Finally, \cite{VoganHiggs2011} present a macro-evolutionary
model in a constant environment and conclude that HGT was probably
favorable in early evolution since loss of genes is frequent, but
later, when genomes are rather adapted to the environment, HGT is not
favorable and gene losses are rarer.

~

Conceptually, HGT within and between populations are different. Above
all, the \emph{tree of life} has become a classical way of thinking
about inheritance since Darwin's \emph{Origin of Species}. However,
the abundance of HGT within population counteracts the tree-like
structures evolutionary biologists like to think about. Results are
phylogenetic networks, which display at the same time the joint
evolutionary fate of many genes
\citep{Huson:2011:Genome-Biol-Evol:21081312,Dagan:2011:Trends-Microbiol:21820313},
in addition to other reticulate events such as hybridization and
incomplete lineage sorting. It is becoming clear that any genealogical
tree of bacteria which have a flexible genome is at most a tree of 1\%
of all genetic material \citep{Dagan:2006:Genome-Biol:17081279}, which
may eventually lead to a paradigm shift in evolutionary biology of
prokaryotes \citep{Koonin:2012:Front-Cell-Infect-Microbiol:22993722}.

Using the incongruence of genes with the species tree, several
approaches have led to a number of methods to estimate HGT rates and
identify the corresponding genes
\citep{Lawrence20021,Kunin2003,Nakhleh2005,Linz2007,Didelot2010}.
Current estimates show that at least 32\% of the genes in prokaryotic
populations have been horizontally transferred at some point
\citep{Koonin:2001:Annu-Rev-Microbiol:11544372,Dagan:2007:Proc-Natl-Acad-Sci-U-S-A:17213324}. It
may even be argued that this number is still a lower bound because
only a fraction of all events of HGT can be seen in data, either
because the transferred gene is subsequently lost or the pattern is in
accordance to vertical gene transfer
\citep{Gogarten:2002:Mol-Biol-Evol:12446813}.

Note that the distinction of HGT within and between populations points
to the long-standing question of a clear definition of a bacterial
species \citep{Fraser2009}. In our approach, we at least assume that
the entity of a bacterial population exists.

~

Recently, the concepts of \emph{open} and \emph{closed} pangenomes
were introduced \citep{Medini2005}. If, after sequencing a finite
number of genomes, all genes present in the population are found, one
speaks of a \emph{closed} pangenome. If new genes are found even after
sequencing many genomes, the pangenome is called \emph{open}.

In the infinitely many genes model with HGT, one can not identify a
sharp transition between open and closed pangenomes in the limit of
large sample sizes ($n \to \infty$). Theorem \ref{T2}
(see~\eqref{eq:9}) shows that an infinite population possesses an
infinite number of genes, regardless of the parameters $\gamma,\rho$
and $\theta$. However, a closer look reveals that almost all of these
genes are in extreme low frequency. Nonetheless it is possible to give
a quantitative impression how typical rare genes are in a sample in
the presence of HGT. It is not hard to see that abundant HGT (i.e.\ a
high value of $\gamma$) implies that most genes are in
high-frequency. In other words, sequencing a new individual hardly
leads to new genes which were not seen before. This impact of openness
and closeness of the pangenome can as well be seen from
Figure~\ref{fig2}.

% In fact, the pangenome consists of only finitely many genes in
% frequencies bounded away from zero, i.e.\ in a frequency above some
% arbitrary $\epsilon > 0$.  Thus in the limit $n \to \infty$ neither
% equation \eqref{eq:9} (tends always to infinity) nor the number of
% genes in frequency above $\epsilon$ (always finite) is capable to
% classify the parameters for open and closed pangenomes.  Or more
% loosely speaking, in the concept of an open pangenome "many" should
% not be considered as infinitely many.  

\begin{figure}
  \begin{center}
    \includegraphics[width=4in]{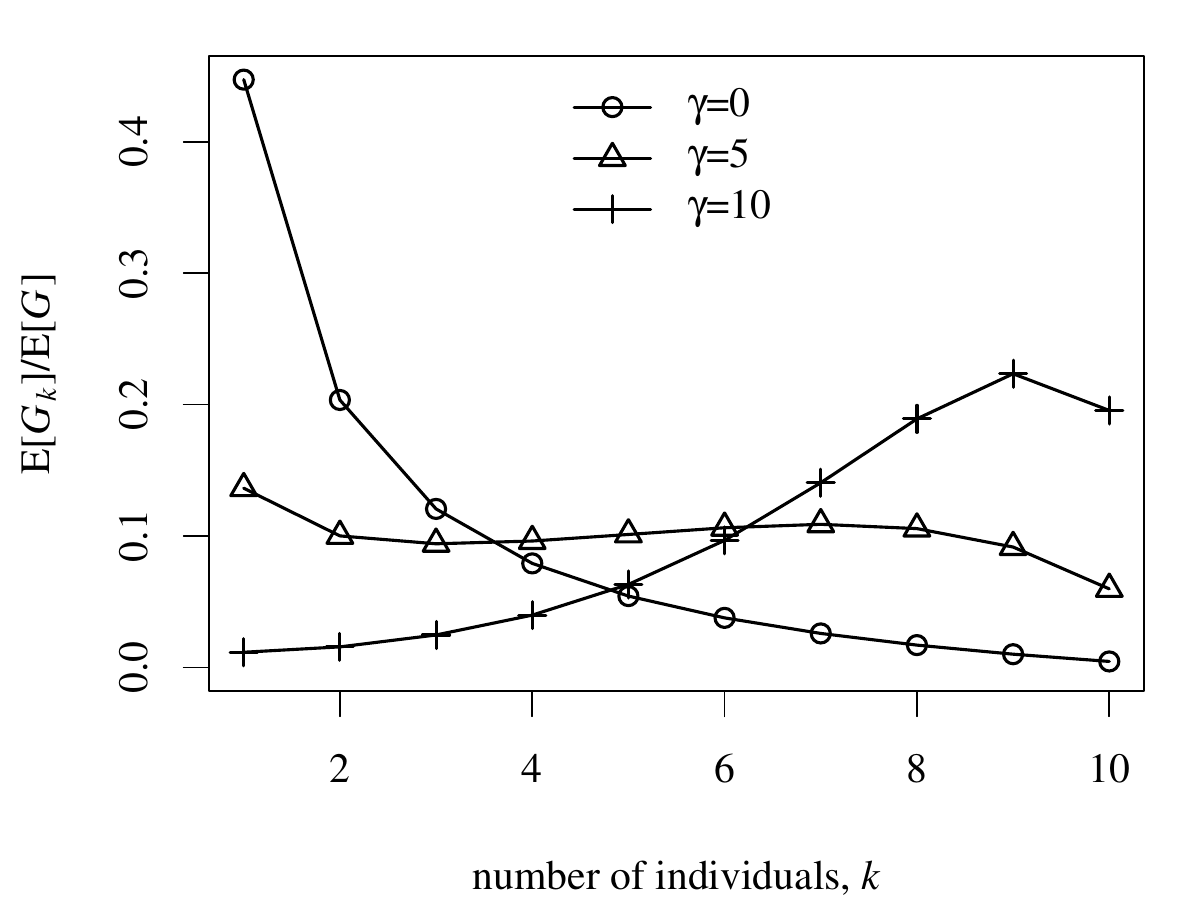}
    \caption{\label{fig2} The expected gene frequency spectrum from
      Theorem~\ref{T1} is highly dependent of $\gamma$, the rate of
      horizontal gene flow. For high values of $\gamma$, most genes
      are in high frequency, leading to a closed pangenome. We use
      $\rho=2$ and sample size $n=10$ in the figure.}
  \end{center}
\end{figure}

~

Although we have made an attempt to include the important evolutionary
force of HGT within a population, the model presented here can still be
extended. We think that the following approaches are conceivable:
\begin{itemize}
\item \emph{Gain, loss and transfer of multiple genes}: The exact
  mechanisms of gene gain, loss and HGT are still under
  study. However, it seems clear that several genes can be gained or
  lost at once.
\item \emph{Gene families}: Frequently, a single gene is present not
  only once but several times in a bacterial genome. The reason can
  either be a copying event along its ancestral line, or the gene is
  introduced by HGT although it was already present.
\item \emph{Gene synteny}: The order of genes in the genome is called
  gene synteny. In our model, the synteny of genes is not modeled, but
  can be observed in genomic data. Above all, gene synteny can give
  hints of events of horizontal gene transfer, since the order of
  genes can be different in donor and recipient.
\item \emph{Mobile genetic elements}: There are parts of the genome
  like mobile elements which are more likely to be transferred
  horizontally. Examples are transposons, plasmids and gene cassettes,
  i.e.\ horizontal gene transfer even within a single cell can be
  considered.
\end{itemize}

%\begin{remark}[Connection to previous results]
%  We introduced the Moran model for bacterial genomes with horizontal
%  gene flow, measured by the parameter $\gamma$. Since the
%  corresponding model without horizontal gene flow was considered in
%  \cite{BaumdickerHessPfaffelhuber2010}, we note that
%  Theorems~\ref{T1}--\ref{T3} imply that all sample statistics are
%  continuous at $\gamma=0$.
%\end{remark}

% \begin{remark}[Simulations]
%   When sequencing bacterial genomes, one is interested in patterns of
%   presence/absence of genes in a sample of size $n$. In the absence of
%   horizontal gene transfer, $\gamma=0$,
%   \cite{BaumdickerHessPfaffelhuber2010} and
%   \cite{BaumdickerHessPfaffelhuber2011} have already introduced a
%   software which implements the infinitely many genes model for
%   bacterial populations using a coalescent-based approach.  It is
%   possible to extend this tool to the case $\gamma>0$ using the AGTG
%   from the next section (see
%   Definition~\ref{def:ATGTmany}). Precisely, Lemma~\ref{l:identity}
%   shows that the AGTG exactly produces the pattern of presence/absence
%   of genes in the large-population-limit.
% \end{remark}

\section{The Ancestral Gene Transfer Graph}
\label{S:AGTG}
Since the seminal work of \cite{Kingman1982} and \cite{Hudson1983},
the genealogical view is a powerful tool in the analysis of population
genetic models. Here, we will give a genealogical construction in
order to obtain the distribution of $\mathcal G^N_1,...\mathcal G^N_n$
for a sample of size $n\in\mathbb N$ in the large population limit of
the Moran model with horizontal gene transfer in equilibrium. The
resulting genealogy is denoted the \emph{Ancestral Gene Transfer
  Graph} (AGTG). In this random graph, every ancestral line splits at
constant rate $\gamma/2$ per gene due to a \emph{potential} gene
transfer event. We note that such events leading to potential
ancestors are well-known for the ancestral selection graph (ASG) of
\cite{NeuhauserKrone1997} and \cite{KroneNeuhauser1997}. However,
potential ancestors in the ASG arise by fitness differences within the
population while the potential ancestors within the AGTG may take
effect by events of horizontal gene transfer.

We start with the construction of the genealogy for a single gene and
come to the full picture including all genes afterwards.

\begin{definition}[The AGTG for a single gene]
  \label{def:AGTGsingle}
  Consider a random graph $\mathcal A_n$ which arises as follows:
  Starting with~$n$ lines, denoted $i=1,...,n$,
  \begin{itemize}
  \item each (unordered) pair of lines coalesces at rate~1,
  \item each line disappears at rate $\rho/2$ (meaning that the gene
    was lost),
  \item each line splits in two lines at rate $\gamma/2$ (meaning that
    the gene was horizontally transferred from another individual which was so far no ancestor of the original $n$ cells,
    such that the gene can now have two different origins).
  \end{itemize}
  Sample a single point $E$ uniformly at random according to the
  length measure from the graph. (This point determines the time when
  the gene under consideration was gained.) For every line $1,...,n$,
  let $G_i=1$ if there is a direct (i.e.\ increasing in time) path
  from $i$ to $E$ and $G_i=0$ otherwise. Then, $(G_1,...,G_n)$ is
  denoted the \emph{gene distribution of a single gene read off from
    the AGTG}.
\end{definition}

\begin{SCfigure}[1.6][ht]
  \begin{tikzpicture}
    \node at (1.65,5.2){$\mathcal A_4$};
    % \draw[thick,dashed] (0.25,2) -- (0.25,4) -- (0.75,4);
    % \draw[thick,dashed] (0.75,4)--(0.75,6);
    \node[scale=1.6] at (0.25,1.9){$\bullet$};
    %\node at (0.4,1.9){$1$};
    \node[scale=1.6] at (1.65,4.8){$\bullet$};
    %\node at (.9,4.8){$1$};
    \draw[thick] (0,0) -- (0,1) -- (0.5,1) -- (0.5,0);
    \draw[thick] (1,0) -- (1,2) -- (1.5,2) -- (1.5,0);
    \draw[thick] (1.25,2) -- (1.25,2.7) -- (1.65,2.7) -- (1.65,4.8); % -- (0.75,4) -- (0.75,4.8);
    \draw[thick] (0.25,1) -- (0.25,1.9);
    \draw[thick] (0.5,0.7) -- (0.9,0.7) arc (180:0:0.1cm) -- (1.4,0.7) arc (180:0:0.1cm) -- (2,0.7) -- (2,2.7) -- (1.25,2.7);
    %\node at (.4,.7){$1$};
    \draw[thick] (2,1.6) -- (2.5,1.6) -- (2.5,3.2);
    %\node at (1.85, 1.6){$1$};
    \node[scale=1.6] at (2.5,3.2){$\bullet$};
    %\node at (2.65,3.2){$1$};
    \node[scale=1] at (1.65,3.1){$\blacktriangledown$};
    \node at (0,-0.25) {1};
    \node at (0.5,-0.25) {2};
    \node at (1,-0.25) {3};
    \node at (1.5,-0.25) {4};
    \node at (0,-0.6) {\xmark};
    \node at (0.5,-0.6) {\cmark};
    \node at (1,-0.6) {\cmark};
    \node at (1.5,-0.6) {\cmark};
  \end{tikzpicture}
  \caption{\label{fig:1} In the construction of $\mathcal A_4$, the
    AGTG for a single gene, start with $4$ lines at the bottom of the
    figure. Every pair of lines coalesces with rate 1, and every line
    splits at rate $\gamma/2$ and disappears (marked by $\bullet$) at
    rate $\rho/2$. The sampled point $E$ (marked by
    $\blacktriangledown$ for a gene gain event) determines $G_1 = 0$
    (indicated by the \xmark) and $G_2=G_3=G_4=1$ (indicated by the
    \cmark's). }
\end{SCfigure}

\noindent
For later use, we show that all moments of the length of the AGTG are
finite. In particular, the length is almost surely finite, and the
uniform distribution according to the length measure, from which $E$
is picked, is well-defined.

\begin{lemma}[Length of AGTG for a single gene has finite
  moments]
  \label{l:length}
  \mbox{}\\ \sloppy Let $\mathcal A_n$ be the AGTG for a single gene
  from Definition~\ref{def:AGTGsingle} and let $L(\mathcal A_n)$ be
  its length. Then, $\mathbb E[L(\mathcal A_n)^k]<\infty$ for all
  $k=1,2,...$
\end{lemma}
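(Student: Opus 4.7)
Let $N_t$ denote the number of lines present in $\mathcal A_n$ at time $t$, so that $N_0 = n$ and $L(\mathcal A_n) = \int_0^\infty N_t\,dt$. Then $(N_t)_{t\geq 0}$ is a continuous-time birth--death chain on $\mathbb N_0$, absorbed at $0$, with birth rate $\lambda_k = k\gamma/2$ (splits) and death rate $\mu_k = \binom{k}{2} + k\rho/2 = k(k-1+\rho)/2$ (pair coalescences plus gene losses). The key quantitative observation driving the whole argument is that $\mu_k/\lambda_k = (k-1+\rho)/\gamma \to \infty$: the chain has a super-linear restoring drift toward $0$.

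My plan is to bound $L(\mathcal A_n) \leq M \cdot T$, where $M := \sup_{t\geq 0} N_t$ and $T := \inf\{t\geq 0 : N_t = 0\}$, and to prove separately that both $M$ and $T$ have finite moments of every order. The Cauchy--Schwarz inequality applied to the $2k$-th moments then yields $\mathbf E[L(\mathcal A_n)^k]\leq \mathbf E[M^{2k}]^{1/2}\mathbf E[T^{2k}]^{1/2}<\infty$.

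For the tail of $M$, I would analyse the embedded jump chain, which moves from state $k\geq 1$ upward with probability $p_k = \lambda_k/(\lambda_k+\mu_k) = \gamma/(\gamma + k-1+\rho)$. The probability of ever reaching a level $m>n$ is therefore at most $\prod_{k=n}^{m-1} p_k$, and since $p_k\sim \gamma/k$ as $k\to\infty$, this product decays faster than any geometric sequence, so $M$ has super-exponential tails and in particular all polynomial moments. For the tail of $T$, I would apply a Foster--Lyapunov argument with an exponential test function $V(k) = e^{\beta k}-1$ for a small $\beta>0$. A direct expansion of $AV(k) = \lambda_k[V(k+1)-V(k)] + \mu_k[V(k-1)-V(k)]$ shows that the $\mu_k$-contribution, of order $k^2 e^{\beta k}(e^{-\beta}-1)$, eventually dominates and forces $AV(k)\leq -\delta V(k)$ for all $k$ outside some finite set $\{1,\dots,k_0\}$; on that finite set the chain is ergodic with $0$ accessible, so the residual time to reach $0$ from there has exponential moments. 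Standard drift--to--a--small--set results then yield $\mathbf E_n[e^{\alpha T}]<\infty$ for some $\alpha>0$, in particular all polynomial moments of $T$.

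The main obstacle is really making the Lyapunov drift condition uniform down to the boundary: when $\gamma>\rho$ a single line splits faster than it is lost, so one cannot expect $AV\leq -\varepsilon V$ at $k=1$ for the most naive choices of $V$. However, this positive drift occurs only on a finite set of states, so the standard small-set machinery handles it after a careful choice of $\beta$ (or of an additive linear correction in a polynomial Lyapunov function); once this quantitative control is in place, the combination with the super-exponential tail of $M$ in the previous paragraph finishes the proof.
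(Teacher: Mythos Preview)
Your approach is different from the paper's and, with one correction, works. The paper exploits a time change: since length accrues at rate $i$ while the line-count process sits at $i$, each sojourn in state $i$ contributes an exponential amount with rate $(\gamma+i-1+\rho)/2$ to $L(\mathcal A_n)$; hence $L(\mathcal A_n)/2$ is itself the absorption time at $0$ of a birth--death chain $Z$ with constant birth rate $\gamma$ and death rate $i-1+\rho$. The paper then dominates $Z-1$ stochastically by an $M/M/\infty$ queue and invokes a known result that all moments of its busy period are finite, decomposing the absorption time into a geometric number of i.i.d.\ busy-period-plus-idle pieces. This bypasses any separate control of $M$ and $T$ and avoids Lyapunov machinery entirely.

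Your route via $L\le M\cdot T$ is legitimate, but the tail bound you assert for $M$ is false as written: the probability that a birth--death chain started at $n$ reaches $m$ before $0$ is \emph{not} bounded above by $\prod_{k=n}^{m-1}p_k$ in general. Take simple symmetric random walk absorbed at $0$: the hitting probability is $n/m$, the product is $2^{-(m-n)}$, and already for $n=1$, $m=3$ one has $\tfrac13>\tfrac14$. The correct gambler's-ruin formula gives
\[
\mathbf P_n(\tau_m<\tau_0)=\frac{\sum_{j=0}^{n-1}\pi_j}{\sum_{j=0}^{m-1}\pi_j},\qquad \pi_j=\prod_{i=1}^j\frac{\mu_i}{\lambda_i}=\prod_{i=1}^j\frac{i-1+\rho}{\gamma},
\]
and since $\pi_j$ grows factorially this ratio does decay super-exponentially in $m$, which is exactly what you need. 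With this repair, your Foster--Lyapunov control of $T$ (which is standard, including the small-set patching at low states you already flag) and the Cauchy--Schwarz combination go through.
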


\begin{proof}
  The number of lines in $\mathcal A_n$ is a birth-death process with
  birth rate $\widehat\lambda_i = \gamma i/2$ and death rate
  $\widehat\mu_i = \binom i 2 + \rho i/2$ (when there are~$i$ lines)
  and $0$ as absorbing state. Since the length during times with~$i$
  lines increases at rate $i$, $L(\mathcal A_n)/2$ is distributed as
  the hitting time $T$ of~0 of a birth-death process $(Z_t)_{t\geq 0}$
  with rates $\lambda_i = \gamma$ and $\mu_i = i-1+\rho$, $i=1,2,...$
  and absorbing state $0$. In order to show finite moments of $T$,
  note that the process $(\check Z_t)_{t\geq 0}$ with $\check Z_t =
  Z_t-1$ is bounded from above by a birth-death process with birth
  rates $\check\lambda_k=\gamma$ and death-rate $\check \mu_k =
  k$. In other words, $(\check Z_t)_{t\geq 0}$ is the number of
  customers in an $M/M/\infty$-queue. Let $S$ denote the partial busy
  period of this queue (i.e.\ the first time when the queue is
  empty). Moreover, when $\check Z_t=0$ we have that $Z_t=1$ and there
  is a chance $\rho/(\gamma+\rho)$ that $T$ is reached after an
  $\exp(\gamma+\rho)$-distributed time.  From this construction, we
  see that $T \leq S_1 + \cdots + S_N$ where $S_k \stackrel d = S +
  S'$, and $S'\sim \exp(\gamma+\rho)$ independent from $S$, and $N
  \sim \text{geom}(\rho/(\gamma+\rho))$, all $S_k$'s being
  independent. Hence, the assertion follows from finite moments of $S$
  \citep{Artalejo2001}.
\end{proof}

\noindent
We now come to the desired connection between the Moran model with
horizontal gene transfer and the AGTG.

\begin{lemma}[Gene distribution of Moran model and AGTG coincide]
  \label{l:AGTGsingle}  
  Fix $u\in[0,1]$ and let $\mathcal G_1^N,...,\mathcal G_n^N$ be as in
  Remark~\ref{rem:equi}. Then, for $N\to\infty$, the distribution of
  $\mathcal G_1^N(u),...,\mathcal G_n^N(u)$, conditioned on
  $\bigcup_{i=1}^n \mathcal G_i^N(u) \neq \emptyset$, converges weakly
  to the distribution of $(G_1,...,G_n)$ from
  Definition~\ref{def:AGTGsingle}.
\end{lemma}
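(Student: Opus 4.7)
The plan is to couple the sample with a finite-$N$ ``potential ancestry graph'' $\mathcal A_n^N$, show that $\mathcal A_n^N$ converges weakly to $\mathcal A_n$ from Definition~\ref{def:AGTGsingle}, and use the Poisson structure of the gene gains to identify $E$ as a uniform point on the total length.

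First, I would construct $\mathcal A_n^N$ by starting from the $n$ sampled lines at the present time and tracing three types of backward events. (a) \emph{Coalescences} at rate $1$ per unordered pair: the Moran model puts rate $1$ on every ordered pair $(i,j)$, the parent is chosen uniformly at each resampling, and the two orderings contribute $\tfrac12+\tfrac12=1$ to the backward coalescence rate of a given pair. (b) \emph{Potential HGT splits} at rate $(N-1)\gamma/(2N) \to \gamma/2$ per line: for each potential donor the forward HGT rate is $\gamma/(2N)$, and going backward one does not a priori know whether the gene was inherited vertically or obtained by HGT from the donor, so both alternatives are kept as the two branches of the split, in analogy with the ancestral selection graph of \cite{NeuhauserKrone1997,KroneNeuhauser1997}. (c) \emph{Potential losses} at rate $\rho/2$ per line, which terminate the line going backward: any gain event sitting on a portion of a line that precedes (in forward time) a loss event would be erased and could not contribute to the present sample, so that portion is not available for hosting $E$. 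Standard convergence-of-generator arguments then yield $\mathcal A_n^N \Rightarrow \mathcal A_n$, and Lemma~\ref{l:length} ensures that the total length, and therefore the distribution of $E$, is well defined in the limit.

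Second, I would identify the gain event. Gene gains in the Moran model form a Poisson point process on $(\text{individuals})\times(\text{time})\times[0,1]$ with intensity $\theta/2$, so for each fixed $u$ the locus-$u$ gains form a Poisson process on $(\text{individuals})\times(\text{time})$ with intensity $(\theta/2)\,du$. By the construction of $\mathcal A_n^N$, the event $\{\bigcup_i \mathcal G_i^N(u)\neq\emptyset\}$ coincides with the existence of a locus-$u$ gain point lying on $\mathcal A_n^N$, and, conditional on this event, the standard uniformity of a single conditioned Poisson point places the gain uniformly on the length measure of $\mathcal A_n^N$. Passing to the limit gives the uniform sampling of $E$ in Definition~\ref{def:AGTGsingle}, and forward reachability from the gain point inside $\mathcal A_n^N$ (via forward splits at backward coalescences and forward merges at backward splits) determines $(\mathcal G_1^N(u),\dots,\mathcal G_n^N(u))$, matching the definition of $(G_1,\dots,G_n)$.

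The main obstacle is step (c): I need to argue rigorously that the backward death rate $\rho/2$ is indeed the correct reduction of the Moran-model loss mechanism, together with the preservation of the conditional uniformity of the retained gain point under this pruning. A natural route is a Poisson-thinning argument: gain and loss events jointly form a Poisson process on an unpruned potential ancestry graph, and truncating each line at its first backward loss event leaves the conditional law of a surviving gain point uniform on the remaining length. Combined with the coalescence/split dynamics of (a)--(b) and a continuous-mapping argument applied to $\mathcal A_n^N \Rightarrow \mathcal A_n$, this should produce the claimed weak convergence of the conditional distribution of $(\mathcal G_1^N(u),\dots,\mathcal G_n^N(u))$.
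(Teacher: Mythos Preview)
Your proposal is correct and follows essentially the same route as the paper: build a finite-$N$ backward graph from the graphical construction of the Moran model (coalescence, potential loss, potential HGT), show it converges weakly to $\mathcal A_n$, and use the Poisson structure of gene gains to identify the conditioned gain as a uniform point on the length. Two small remarks. First, the paper takes the per-line split rate to be $\gamma(N-i)/(2N)$ when $i$ lines are currently in the graph (since the donor of the HGT event may already be one of the tracked lines), rather than your $(N-1)\gamma/(2N)$; both converge to $\gamma/2$, so this does not affect the limit. Second, your worry about step~(c) is largely self-imposed: the paper incorporates loss events directly into the definition of the backward graph (lines are killed at rate $\rho/2$) \emph{before} laying down gain events, so that gains form a Poisson process of intensity $(\theta/2)\,du$ on the already-pruned graph and conditional uniformity is immediate from standard Poisson properties---no separate thinning argument is required.
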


%\begin{remark}[Weak convergence of graphs]
%  We have to specify the notion of weak convergence 
%\end{remark}

\begin{proof}[Proof of Lemma~\ref{l:AGTGsingle}]
  Consider the graphical construction of a Moran model with horizontal
  gene flow from Definition~\ref{def:moran}, run between times
  $-\infty$ and~0.  Let $\mathcal G^N_i(-t)$ be the finite measure
  describing the genome of individual $i$ at time $-t$. If we consider
  only a single gene $u\in[0,1]$, we can use the following procedure
  in order to obtain the genealogy and distribution of gene $u$ in
  $\mathcal G_1^N,...,\mathcal G_n^N$:
  \begin{enumerate}
  \item[(a)] Restrict the Moran model to (i) resampling events, (ii)
    potential gene loss events of gene $u$ at rate $\rho/2$ per line,
    (iii) potential horizontal gene transfer events of gene $u$ at
    rate $\gamma/(2N)$ per pair of individuals.
  \item[(b)] Put gene gain events on all lines according to a Poisson point
    process with intensity $(\theta/2) du$.
  \end{enumerate}
  Clearly, by (a)\, we can determine the coordinates $(i,-t)$ with the
  property, that $u\in \bigcup_{j=1}^n \mathcal G_j^N(0)$ iff
  $u\in\mathcal G_i^N(-t)$. This subgraph is a random graph, which can
  be constructed from time 0 backwards as follows: Starting with $n$
  lines, any pair of lines coalesces at rate~1, every line is killed
  at rate $\rho/2$ and every line splits in two lines (due to a
  horizontal gene transfer event) at rate $\gamma (N-k)/(2N)$ if there
  are currently $k$ lines within the graph. (For the latter rate,
  observe that the donor of gene $u$ might already be part of the
  graph.) Hence, as $N\to\infty$, this random graph converges (weakly)
  to the AGTG for a single gene as in
  Definition~\ref{def:AGTGsingle}. In addition, for small
  $\varepsilon$, genes in $(u-\varepsilon, u+\varepsilon)$ are gained
  at most once on this graph. Hence, when conditioning on the event of
  a gene gain of gene $u$ on the random graph (i.e.\ the Poisson point
  process has a point $(x,u)$), by well-known properties of Poisson
  processes, this event is uniformly distributed on the graph. In
  other words, the distribution is the same as that of $E$ in
  Definition~\ref{def:AGTGsingle}.
\end{proof}

~

\noindent
While the construction of the genealogy of a single gene was
straight-forward, considering all genealogies of all genes seems to be
harder. The reason is that there can be infinitely many genes, and
each of these genes comes with its own events of gene gain, loss and
horizontal gene flow. Even worse, we can decide on the presence or
absence of a gene only if we know if there was a gene gain event
somewhere along the genealogy, which means that we have to follow all
(uncountably many) potential genes back in time.

We will resolve such difficulties by constructing (countably) many
potential genealogies, which all rely on the same clonal genealogy of 
$n$ bacterial cells,
and model gene gain events along them. The
result is the ancestral gene transfer graph (AGTG) for infinitely many
genes. An illustration is found in Figure~\ref{fig:Ls}.

\begin{figure}

  %%% BILD 1; common genealogy
%   \hspace*{0.5cm}
  \begin{tikzpicture}
  %% time axis
    \draw[thick, <- ] (-0.5,1.5) --(-0.5,3.5);
    \node[above] at (-0.5,3.5){time};
  %% picture
    \node at (0.7,6.7){$\mathcal A_4^{(0)}$};
    \draw[thick] (0,0) -- (0,1) -- (0.5,1) -- (0.5,0);
    \draw[thick] (1,0) -- (1,2) -- (1.5,2) -- (1.5,0);
    \draw[thick] (0.25,1) -- (0.25,4) -- (1.25,4) -- (1.25,2);
    \draw[thick] (0.75,4)--(0.75,6);
  \end{tikzpicture}
%   \hspace{2cm}
  \hfill \hfill
  %%% BILD 2; genealogy of first gene
  \begin{tikzpicture}
    \node at (0.7,6.7){$\mathcal A_4^{(1)}$};
    \draw[thick,dashed] (0.25,2) -- (0.25,4) -- (0.75,4);
    \draw[thick,dashed] (0.75,4)--(0.75,6);
    \node at (0.25,1.9){$\bullet$};
    \node at (0.4,1.9){$1$};
    \node at (.75,4.8){$\bullet$};
    \node at (.9,4.8){$1$};
    \draw[thick] (0,0) -- (0,1) -- (0.5,1) -- (0.5,0);
    \draw[thick] (1,0) -- (1,2) -- (1.5,2) -- (1.5,0);
    \draw[thick] (1.25,2) -- (1.25,4) -- (0.75,4) -- (0.75,4.8);
    \draw[thick] (0.25,1) -- (0.25,1.9);
    \draw[thick] (0.5,0.7) -- (0.9,0.7) arc (180:0:0.1cm) -- (1.4,0.7) arc (180:0:0.1cm) -- (2,0.7) -- (2,2.7) -- (1.25,2.7);
    \node at (.4,.7){$1$};
    \draw[thick] (2,1.6) -- (2.5,1.6) -- (2.5,3.2);
    \node at (1.85, 1.6){$1$};
    \node at (2.5,3.2){$\bullet$};
    \node at (2.65,3.2){$1$};
  \end{tikzpicture}
%   \hspace{1cm}
   \hfill
  %%% BILD 3; genealogy of second gene
  \begin{tikzpicture}
    \node at (0.7,6.7){$\mathcal A_4^{(2)}$};
    \draw[thick,dashed] (0.25,1) -- (0.5,1) -- (0.5,0.5);
    \draw[thick,dashed] (0.75,4.2)--(0.75,5);
    \draw[thick,dashed] (0.5,0.7) -- (0.9,0.7) arc (180:0:0.1cm) -- (1.4,0.7) arc (180:0:0.1cm) -- (2,0.7) -- (2,2.7) -- (1.25,2.7);
    \draw[thick,dashed] (2,1.6) -- (2.5,1.6) -- (2.5,2.5) -- (2.75,2.5);
    \draw[thick] (1.25,2) -- (1.25,4) -- (0.75,4);
    \draw[thick] (0,0) -- (0,1) -- (0.25,1) -- (0.25,4) -- (0.75,4) -- (0.75,4.2);
    \node at (.75,5.8){$\bullet$};
    \node at (0.9,5.8){$2$};
    \draw[thick] (0.5,0) -- (0.5,0.5);
    \node at (.5,0.5){$\bullet$};
    \node at (0.35,0.5){$2$};
    \draw[thick] (1,0) -- (1,2) -- (1.5,2) -- (1.5,0);
    \draw[thick] (1.5,0.95) -- (1.9,0.95) arc (180:0:0.1cm) -- (3,0.95);
    \node at (1.35,0.95){$2$};
    \draw[thick] (3,0.95) -- (3,2.5) -- (2.75,2.5) -- (2.75,5) -- (.75,5) -- (.75,5.8);
    \node at (.75,4.2){$\bullet$};
    \node at (.9,4.2){$2$};
  \end{tikzpicture}
  \caption{\label{fig:Ls}In the construction of the AGTG, start with
    the clonal genealogy for the bacterial cells of the sample (here of size~4), i.e.\ with
    $\mathcal A_4^{(0)}$. Then, in order to obtain the genealogy of
    the first potential gene, construct $\mathcal A_4^{(1)}$ by additional
    splitting events (at rate $\gamma/2$), loss events (at rate
    $\rho/2$), both marked by $1$, and coalescence
    events. Iteratively, construct $\mathcal A_4^{(n+1)}$ by keeping
    all lines in $\cup_{i=0}^n \mathcal A_4^{(i)}$ and adding
    splitting, loss and coalescence events. In the three figures, the
    vertical solid lines are the ones where -- potentially -- the corresponding
    gene can be gained. This means that gene~2 is present only if the
    second gain event at time $T_2$ occurs at a time which is smaller
    than the sum of all vertical lines in $\mathcal A_4^{(2)}$,
    i.e.\ smaller than $L(\mathcal A_4^{(2)})$. If it occurs, it is
    put uniformly on the solid vertical lines.  }
\end{figure}
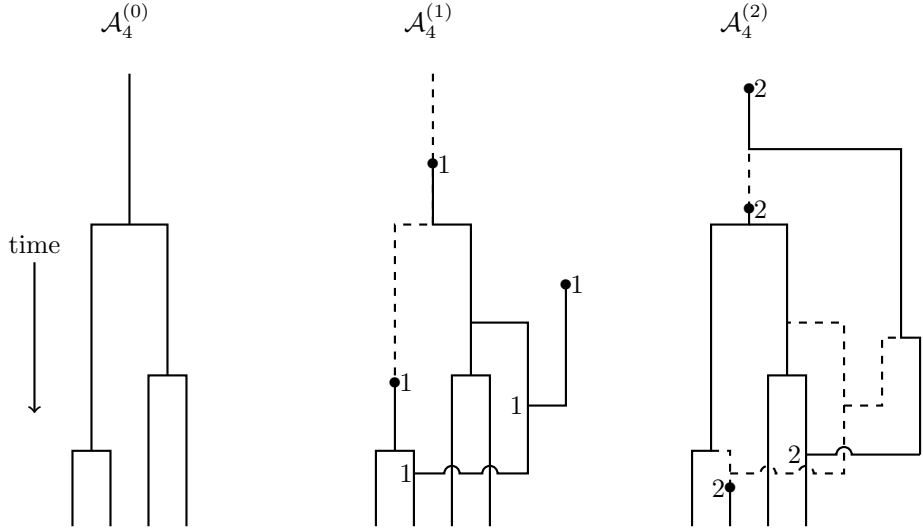

\begin{definition}[The AGTG for infinitely many genes]
  \label{def:ATGTmany}
  \sloppy Consider a sequence $\mathcal A_n^{(0)}, \mathcal A_n^{(1)},
  \mathcal A_n^{(2)}, ...$ of coupled random graphs which arise as
  follows:
  \begin{itemize}
  \item $\mathcal A_n^{(0)}$ is distributed according to Kingman's
    coalescent, i.e.\, starting with $n$ lines, each (unordered) pair
    of lines coalesces at rate~1 (and the graph is stopped as soon as
    the number of blocks (lines) is one).
  \end{itemize}
  Given $\mathcal A_n^{(0)}$, the random graph $\mathcal A_n^{(1)}$
  gives all potential ancestors of gene~1 and is constructed such
  that: Starting with the same $n$ lines as in $\mathcal A_n^{(0)}$,
  \begin{itemize}
  \item each line splits in a continuing and an incoming line at rate
    $\gamma/2$ (meaning that the gene was horizontally transferred
    from the incoming line). If the line was part of $\mathcal
    A_n^{(0)}$, the continuing line runs along $\mathcal A_n^{(0)}$ as
    well. The resulting splitting event is marked with \enquote{1}
  \item each line is terminated by a loss event, also marked with
    \enquote{1}, at rate $\rho/2$ (indicating that gene 1 was lost).
  \item in addition to coalescences of lines within $\mathcal
    A_n^{(0)}$, each (unordered) pair of lines in $(\mathcal
    A_n^{(1)}\setminus \mathcal A_n^{(0)})^2$ and in $\mathcal
    A_n^{(0)} \times (\mathcal A_n^{(1)}\setminus \mathcal A_n^{(0)})$
    coalesces at rate 1 (and $\mathcal A_n^{(1)}$ is stopped as soon
    as the number of lines is zero).
  \end{itemize}
  Given $\mathcal A_n^{(0)},...,\mathcal A_n^{(k)}$, the random graph
  $\mathcal A_n^{(k+1)}$ gives all potential ancestors of gene~$k+1$
  and is constructed such that: Starting with the same $n$ lines as in
  $\mathcal A_n^{(0)}$,
  \begin{itemize}
  \item each line splits in a continuing and an incoming line at rate
    $\gamma/2$ (meaning that the gene was horizontally transferred
    from the incoming line). If the line was part of
    $\bigcup_{j=0}^k\mathcal A_n^{(j)}$, the continuing line runs
    along $\bigcup_{j=0}^k\mathcal A_n^{(j)}$ as well. The resulting
    splitting event is marked with \enquote{$k+1\!$}.
  \item each line is terminated by a loss event, also marked with
    \enquote{$k+1\!$}, at rate $\rho/2$ (indicating that gene $k+1$ was
    lost).
  \item in addition to coalescences of lines within $\bigcup_{j=0}^k
    \mathcal A_n^{(j)}$, each (unordered) pair of lines in $(\mathcal
    A_n^{(k+1)}\setminus \bigcup_{j=0}^k \mathcal A_n^{(j)})^2$ and in
    $\bigcup_{j=0}^k \mathcal A_n^{(j)} \times (\mathcal
    A_n^{(k+1)}\setminus \bigcup_{j=0}^k\mathcal A_n^{(j)})$ coalesces
    at rate 1 (and the graph $\mathcal A_n^{(k+1)}$ is stopped as soon
    as the number of lines is zero).
  \end{itemize}
  % Kommentar 4 referee 2
  After the construction of all graphs we consider for each $k$ only the relevant parts of $\mathcal A_n^{(k)}$, 
  i.e.\ those parts that
%   Thus $\mathcal A_n^{(k+1)}$ consists only of lines that
  can be
  reached from at least one of the leaves by running through
  coalescence events or splitting events marked with $k$.
  In Figure~\ref{fig:Ls} these parts are shown as solid lines,
  while the additional lines, necessary for the construction but unreachable
  from the leaves, are shown as dashed lines.

  In order to model gene gain events, consider the events
  $(T_m,U_m)_{m=1,2,...}$ of a Poisson point process on
  $[0,\infty)\times [0,1]$ with intensity measure $\tfrac 12\theta
  \,dt \, du$ (ordered by their first coordinate). For all $k$,
  \begin{itemize}
  \item let $L(\mathcal A_n^{(k)})$ be the length of $\mathcal
    A_n^{(k)}$, i.e.\ the length of all vertical solid lines in Figure~\ref{fig:Ls}.
    If $T_k\leq L(\mathcal A_n^{(k)})$, pick a point $E_k$
    uniformly at random according to the length measure on $\mathcal
    A_n^{(k)}$. (This point determines the time and line when the gene
    $U_k$ was gained.)
  \end{itemize}
  Finally, for every $i=1,...,n$, let $U_k\in{\mathcal G_i}$ if there
  is a direct (i.e.\ increasing in time) path from $i$ to $E_k$ in $\mathcal A^{(k)}_n$. Then,
  $({\mathcal G_1},..., {\mathcal G_n})$ is denoted the \emph{Gene
    distribution read off from the AGTG in the infinitely many genes
    model}.
\end{definition}

\begin{remark}[Alternative way of distributing gain events on the
  AGTG]
  {\em In the last step of constructing the AGTG, we used the
    condition $T_k\leq L(\mathcal A_n^{(k)})$ in order to distribute a
    uniformly chosen point $E_k$ on $\mathcal A_n^{(k)}$. In
    distribution, the same result is achieved as follows: If $T_k\leq
    L(\mathcal A_n^{(k)})$, choose a way of running through $\mathcal
    A_n^{(k)}$ along all paths at constant speed. Such a path might
    well go back and forth and jump in time. Then, the gene gain event
    is placed after running length $T_k$.}
\end{remark}

\noindent
The following Lemma is the key element in the proofs given in Section
\ref{S:proofs}.

\begin{lemma}[\sloppy Gene distribution from Moran model and AGTG
  coincide]
  \label{l:identity}
  \sloppy Fix $n\in\mathbb N$, let $(\mathcal G_1^{N},...,\mathcal
  G_n^{N})$ be as in Definition~\ref{def:moran} and
  Remark~\ref{rem:equi}, and $({\mathcal G_1},..., {\mathcal G_n})$ as
  in Definition~\ref{def:ATGTmany}. Then,
  \begin{align}
    \label{eq:convRM}
    (\mathcal G_1^{N},...,\mathcal G_n^{N}) \xRightarrow{N\to\infty}
    ( {\mathcal G_1},..., {\mathcal G_n})
  \end{align}
  as well as
  \begin{align}
    \label{eq:convRM2}
    (\mathcal G_{1}^{N} \otimes \cdots \otimes \mathcal G_n^{N})
    \xRightarrow{N\to\infty} (\mathcal G_i \otimes \cdots \otimes
    \mathcal G_n)
  \end{align}
\end{lemma}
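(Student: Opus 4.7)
The plan is to extend the graphical construction used in the proof of Lemma~\ref{l:AGTGsingle} from a single gene to all genes simultaneously. I would run the Moran model in stationarity from time $-\infty$ to $0$ (justified by Harris recurrence, Remark~\ref{rem:equi}) and decompose its Poissonian clocks into three mutually independent layers: (i) resampling events, which are shared across all gene labels; (ii) for each $u\in[0,1]$, its own gene-loss events at rate $\rho/2$ per individual and HGT events at rate $\gamma/(2N)$ per ordered pair; (iii) gene-gain events, forming a Poisson point process on individuals$\,\times\,[0,\infty)\,\times\,[0,1]$ with intensity $(\theta/2)\,dt\,du$.

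Tracing back from the sample $1,\dots,n$ using only layer (i) yields the clonal genealogy, which as $N\to\infty$ converges to Kingman's coalescent $\mathcal A_n^{(0)}$. For a fixed label $u\in[0,1]$, tracing back using the clonal events of layer (i) together with the $u$-specific events of layer (ii) produces the ancestry of gene $u$, and the proof of Lemma~\ref{l:AGTGsingle} shows that this converges in distribution to the single-gene AGTG whose clonal skeleton is $\mathcal A_n^{(0)}$: each active line splits at rate $\gamma/2$, each is terminated at rate $\rho/2$, and newly introduced HGT lines coalesce at rate $1$ per pair with any other active line (the probability of falling onto an already-present lineage is $O(1)$ since there are $O(N)$ candidate pairs, each at rate $1/N$).

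The key new step is to show that this convergence holds \emph{jointly} over gene labels. Since the events in layer (ii) are by construction independent across distinct $u$, the pre-limit ancestry graphs of distinct genes are conditionally independent given the resampling structure, and this conditional independence is inherited in the limit, given $\mathcal A_n^{(0)}$. Ordering the layer-(iii) events by time as $(T_k,U_k)_{k=1,2,\dots}$ with $T_1<T_2<\cdots$, the ancestry of the gene born at $T_k$ matches in law the graph $\mathcal A_n^{(k)}$ of Definition~\ref{def:ATGTmany}: the iterative \enquote{continuing line runs along $\bigcup_{j<k}\mathcal A_n^{(j)}$} prescription is nothing but the bookkeeping statement that, if an HGT-introduced line for gene $k$ happens to coincide with an individual whose lineage is already being tracked by a previous $\mathcal A_n^{(j)}$, then its subsequent clonal history (driven by the shared layer-(i) events) coincides with that already-tracked lineage.

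Placing the gain events on each $\mathcal A_n^{(k)}$ is the same Poisson-thinning argument as in the single-gene case: conditional on the ancestry graphs, the restriction of layer (iii) to them is still a PPP with intensity $(\theta/2)\,dt\,du$, so conditional on $T_k\le L(\mathcal A_n^{(k)})$ the gain point $E_k$ is uniform on the length measure of $\mathcal A_n^{(k)}$, exactly as in Definition~\ref{def:ATGTmany}. This yields~\eqref{eq:convRM}, and~\eqref{eq:convRM2} follows from~\eqref{eq:convRM} by the continuous mapping theorem, since the tensor product is continuous on the space of finite counting measures on the compact interval $[0,1]$ under the weak topology. The main technical obstacle is handling countably many potential genes uniformly in $N$; this is controlled via Lemma~\ref{l:length}, whose finite moments of $L(\mathcal A_n^{(k)})$ ensure that only finitely many layer-(iii) events fall on any of the relevant ancestry graphs, so the whole construction is recoverable from a finite (random) number of single-gene AGTGs and the joint convergence can be reduced to the one-gene case applied finitely often.
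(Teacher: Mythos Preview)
There are two genuine gaps.

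First, your layer-(ii) decomposition is not well-defined: you cannot equip each $u\in[0,1]$ with its own independent Poisson clocks for loss and HGT at positive rate, since that would be uncountably many independent point processes. In the forward Moran model, loss and HGT events occur only for genes actually present (a finite set at any time), so there is no such family of clocks to decompose into. This is precisely the difficulty the paper flags before Definition~\ref{def:ATGTmany}, and why its proof takes a detour through a discretized $\text{Moran}_\Delta$-model with $K$ gene classes (Step~1), builds $K{+}1$ coupled random graphs (Step~2), and only then passes $K\to\infty$ (Step~3) using the Kallenberg criterion of Remark~\ref{rem:kall}. Your appeal to Lemma~\ref{l:length} at the end does not substitute for this step.

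Second, and more seriously, your claim that conditional independence of the ancestry graphs is ``inherited in the limit, given $\mathcal A_n^{(0)}$'' is false. In the finite model the conditional independence holds given the \emph{full} resampling structure among all $N$ individuals, but that structure does not collapse to the clonal tree $\mathcal A_n^{(0)}$ as $N\to\infty$: an incoming (HGT-donor) line introduced for gene $k$ and an incoming line for gene $k'$ are two extra lineages in the population and coalesce with one another at rate~$1$ in the limit. This is exactly why Definition~\ref{def:ATGTmany} lets new lines of $\mathcal A_n^{(k+1)}$ coalesce with lines in $\bigcup_{j\le k}\mathcal A_n^{(j)}$, not merely with $\mathcal A_n^{(0)}$. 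Your reading of the ``continuing line runs along $\bigcup_{j<k}\mathcal A_n^{(j)}$'' clause as mere bookkeeping misses this: that clause concerns the continuing line after a split, whereas the dependence that matters comes from the \emph{incoming} line subsequently coalescing with HGT lines of earlier genes. Concretely, for $n=1$ the graph $\mathcal A_1^{(0)}$ is trivial, so your conditional-independence picture forces $\mathbb{COV}[L(\mathcal A^1),L(\mathcal A^2)]=0$, contradicting the nonzero $\gamma^2$-term computed in~\eqref{eq:lin4} and used in Theorem~\ref{T3}. Hence your construction does not reproduce the correct joint law, and the continuous-mapping shortcut you propose for~\eqref{eq:convRM2} is moot because~\eqref{eq:convRM} itself is not established.
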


\begin{remark}[Interpretation of~\eqref{eq:convRM}
  and~\eqref{eq:convRM2} and a convergence criterion]\mbox{}
  {\em \label{rem:kall}
    \begin{enumerate}
    \item Note that the space of finite (counting) measures on $[0,1]$
      is equipped with the topology of weak (or vague)
      convergence. (In our proofs we will use Skorohod's Theorem which
      states that weak convergence is equivalent to almost sure
      convergence on an appropriate probability space;
      cf.~\citealp{Kallenberg2002}, Theorem~4.30.) In addition, we
      will interpret a vector $(\xi_1,...,\xi_n)$ of counting measures
      on $[0,1]$ as a counting measure on $\{1,...,n\}\times
      [0,1]$. Henceforth, we write $\mathcal G^N (\cup_{i=1}^n
      \{i\}\times A_i) = \prod_{i=1}^n \mathcal G^N_i (A_i)$ for
      $A_1,...,A_n\in\mathcal B([0,1])$ such that~\eqref{eq:convRM} is
      the same as
      $$ (\langle \mathcal G_1^{N}, f_1\rangle,...,\langle \mathcal
      G_n^{N}, f_n\rangle) \xRightarrow{N\to\infty} ( \langle
      {\mathcal G_1}, f_1\rangle,..., \langle{\mathcal G_n}, f_n,
      \rangle)$$ for all $f_1,...,f_n\in\mathcal C([0,1])$.
    
      Since $1\in\mathcal C([0,1])$,~\eqref{eq:convRM} also implies
      the convergence of total masses of $\mathcal G_i^{N}$. In
      addition,~\eqref{eq:convRM2} is stronger because total masses of
      products, $\langle \mathcal G_1^{N}, 1\rangle\cdots \langle
      \mathcal G_n^{N}, 1\rangle$ converge as well.
    \item In our proof, we use the following convergence criterion from
      \cite{Kallenberg2002}, Proposition 16.17, here adapted for random
      measures on a compact space:
      \begin{quote}
        Let $\xi, \xi^1, \xi^2,...$ be random counting measures on a
        compact metric space $I$, where $\xi$ is simple. Then,
        $\xi_n\xRightarrow{N\to\infty}\xi$, if (i) $\mathbb
        P(\xi_n(A)=0) \xrightarrow{n\to\infty} \mathbb P(\xi(A)=0)$
        for all open $A\subseteq I$ and (ii) $\limsup_{n\to\infty}
        \mathbb E[\xi_n(A)] \leq \mathbb E[\xi(A)] < \infty$ for all
        compact $A\subseteq I$.
      \end{quote}
    \end{enumerate}
    }
\end{remark}

\begin{proof}[Proof of Lemma~\ref{l:identity}]
  We proceed in five steps. In Step~1, we define another set of models
  for a population of size $N$ with horizontal gene transfer, indexed
  by $K$, in which $I=[0,1]$ is separated into $K$ classes of genes,
  $\Delta_i^K:=[(i-1)/K; i/K), i=1,...,K$. For the resulting genomes,
  denoted $(\mathcal G_1^{N,K},..., \mathcal G_n^{N,K})$, we show in
  Step~2 that the genealogies of $(\mathcal G_1^{N,K},..., \mathcal
  G_n^{N,K})$ are given by an AGTG with $K+1$ coupled random
  graphs. The construction of these random graphs can be re-ordered
  such that the limit $K\to\infty$ can be taken easily; see Step~3. In
  Step~4, we let $N\to\infty$ and show the convergence of the coupled
  random graphs to $(\mathcal A_n^{(0)}, \mathcal A_n^{(1)}, ...)$,
  implying the convergence to $(\mathcal G_1,..., \mathcal G_n)$. In
  the last step we show convergence of second moments.

  ~

  \noindent
  {\it Step 1: Definition of $\mathcal G_i^{N,K}$}: Fix $K\in\mathbb
  N$ and set $\Delta_i^K := [(i-1)/K; i/K), i=1,...,K$. We define
  another Moran model (called $\text{Moran}_\Delta$-model) with
  horizontal gene transfer. Briefly, in this model, all genes
  $u\in\Delta_i^K$ follow the same gene loss and gene transfer
  events. Precisely, in addition to resampling events (at rate~1 for
  every ordered pair of individuals), the following events occur:
  \begin{enumerate}
  \item \emph{Gene loss}: For all $k=1,...,K$, a gene loss event
    occurs at rate $\rho/2$ per individual. Upon such an event in
    individual $i$, we have $\mathcal G_i^{N,K}(t) = \mathcal
    G_i^{N,K}(t-)\setminus \Delta_k^K$, i.e.\ all
    genes~$u\in\Delta_k^K$ are lost from~$\mathcal G_i^{N,K}(t)$.
  \item \emph{Gene gain}: For every individual $i$, at rate
    $\theta/2$, choose $U$ uniformly in $[0,1]$. If $U\in\Delta_k^K$,
    set $\mathcal G_i^{N,K}(t) = (\mathcal G_i^{N,K}(t-)\setminus
    \Delta_k^K)\cup \{U\}$, i.e.\ $U$ is the only gene in $\mathcal
    G_i^{N,K}(t)\cap \Delta_k^K$.
  \item \emph{Horizontal gene transfer}: For every (ordered) pair of
    individuals $(i,j)$ and $k=1,...,K$, a horizontal gene transfer
    event occurs at rate $\gamma/(2N)$. For such an event, set $\mathcal
    G_j^{N,K}(t) = \mathcal G_j^{N,K}(t-)\cup (\mathcal
    G_i^{N,K}(t-)\cap\Delta_k^K)$, i.e.\ individual~$i$ is the donor
    of all genes~$u\in\mathcal G_i^{N,K}(t-)\cap\Delta_k^K$ to the
    recipient~$j$.
  \end{enumerate}
  Again, $(\mathcal G_1^{N,K}(t),...,\mathcal G_N^{N,K}(t))_{t\geq 0}$
  has a unique ergodic equilibrium; compare with
  Lemma~\ref{rem:equi}. We start it at time $-\infty$ and thus obtain
  the equilibrium measures $(\mathcal G_1^{N,K}:=\mathcal
  G_1^{N,K}(0),...,\mathcal G_N^{N,K}:=\mathcal G_N^{N,K}(0))$ by time
  0.

  ~

  \noindent
  {\it Step 2: $(\mathcal G_1^{N,K},...,\mathcal G_n^{N,K})$ can be
    constructed using $K+1$ random graphs}: Recall the construction of
  the AGTG for a single gene from Definition~\ref{def:AGTGsingle}. We
  extend this construction in order to obtain the distribution of
  $(\mathcal G_1^{N,K},...,\mathcal G_n^{N,K})$. Since $K$ is finite,
  we can proceed by a two-step procedure similar to the proof of
  Lemma~\ref{l:AGTGsingle} in the $\text{Moran}_\Delta$ model. Here,
  we first generate resampling, gene loss and transfer events and subsequently
  introduce gene gain events. So, first consider a Moran
  model with (i) resampling events, (ii) potential gene loss events
  for genes in $\Delta_k^K$ with rate $\rho/2$ along all lines, where
  a transition from $\mathcal G_i^{N,K}(t)$ to $\mathcal
  G_i^{N,K}(t)\setminus \Delta_k^K$ occurs, $k=1,...,K$ and (iii)
  potential gene transfer events of genes in $\Delta_k^K$ with rate
  $\gamma/(2N)$ per pair of individuals, as in $\text{Moran}_\Delta$,
  $k=1,...,K$. Next, introduce gene gain events for all lines and all
  $\Delta_k^K, k=1,...,K$ at rate $\theta/(2K)$, where each new gene is
  assigned a uniformly distributed random variable on $\Delta_k^K$.

  Equivalently, as for the AGTG for a single gene, we can start from
  time $0$ backwards and construct $K+1$ random graphs such that graph
  $k$ describes the possible ancestry of genes in $\Delta_k^K,
  k=1,...,K$. Precisely, start with graph~$0$, which is a coalescent
  started with $n$ individuals (without gene loss and horizontal gene
  transfer events). In graph~1, add gene loss events, valid for all
  $u\in\Delta_1^K$ and gene transfer events, which lead to splits of
  lines in the graph at rate $\gamma (N-m)/(2N)$, if it currently has
  $m$ lines. In addition, at rate $\gamma m/(2N)$, the split of a line
  leads to ancestry to a line which is already within the
  graph. Iteratively, in graph $k$, additional loss and split events,
  valid for genes $u\in\Delta_k^K$, occur. Again, a split might generate
  a line which was already present in graph $0,...,k-1$, and
  otherwise gives a new line. These graphs are denoted $\mathcal
  A_{n,N,K}^{(0)},...,\mathcal A_{n,N,K}^{(K)}$.

  After having constructed all $K+1$ random graphs, graphs $1,...,K$
  are hit by gene gain events, each with rate $\theta /(2K)$.  As
  above, each new gene in graph~$k$ is assigned a uniformly
  distributed random variable on $\Delta_k^K$. In each $\Delta_k^K$,
  keep only the gene which is closest to time $0$, since in
  $\text{Moran}_\Delta$, a new gene in $\Delta_k^K$ overwrites present
  ones. By this procedure, we can read off $(\mathcal
  G_1^{N,K},...,\mathcal G_n^{N,K})$ from the random graphs, which
  are marked by gene gain events. Note that there is at most one gene in each $\Delta_k^K$ for any individual $i$ (i.e.\ $\mathcal
  G_i^{N,K}(\Delta_k^K)\leq 1$) and we claim that graphs $1,...,K$ are
  exchangeable by construction. Indeed, the gene losses and splits of
  $\mathcal A_{n,N,K}^{(j)}$ are only valid for genes in $\Delta_j^K$.
  Hence the crucial part to understand exchangeability is the time a
  line produced by a split within $\Delta_j^K$ needs to merge back to
  the graph $\mathcal A_{n,N,K}^{(0)},...,\mathcal A_{n,N,K}^{(i)}$
  for $i < j$.  The newly generated line merges with each line in
  $\mathcal A_{n,N,K}^{(0)},...,\mathcal A_{n,N,K}^{(i)}$ at rate 1.
  This rate does not depend on whether the new line merges previously
  to a line in $\mathcal A_{n,N,K}^{(i+1)},...,\mathcal
  A_{n,N,K}^{(j-1)} \setminus \mathcal A_{n,N,K}^{(0)},...,\mathcal
  A_{n,N,K}^{(i)}$ or not, as each line in $\mathcal
  A_{n,N,K}^{(i+1)},...,\mathcal A_{n,N,K}^{(j-1)} \setminus \mathcal
  A_{n,N,K}^{(0)},...,\mathcal A_{n,N,K}^{(i)}$ merges as well at rate
  1 with each line in $\mathcal A_{n,N,K}^{(0)},...,\mathcal
  A_{n,N,K}^{(i)}$.  Thus the times to merge to $\mathcal
  A_{n,N,K}^{(0)},...,\mathcal A_{n,N,K}^{(i)}$ are equal in law for
  the line produced by a split in $\Delta_j^K$ and the line in
  $\mathcal A_{n,N,K}^{(i+1)}$ produced at the same time by a split in
  $\Delta_{i+1}^K$.

  ~

  \noindent
  {\it Step 3: $(\mathcal G_1^{N,K},...,\mathcal
    G_n^{N,K})\xRightarrow{K\to\infty} (\mathcal G_1^{N},...,\mathcal
    G_n^{N})$ and the limit can be constructed using countably many
    random graphs}: In the construction of the last step, we reverse
  the order of generating gene gain events and the random
  graphs. First, let $(T_m, U_m)_{m=1,2,...}$ be the points in a
  Poisson point process $\mathcal T$ on $[0,\infty)\times[0,1]$ with
  intensity $\tfrac \theta 2 dt du$, ordered by their first
  coordinates. Instead of constructing the random graphs $0,...,K+1$
  in the order of the intervals $\Delta_1^N,...,\Delta_K^N$ in
  $[0,1]$, we can as well construct the random graphs in the order of
  appearance of gene gain events. Formally, let $K_m:=k$ if $U_m\in
  \Delta_k^K$, i.e.\ $K_m$ gives the number of the interval
  $\Delta_k^K$ in which the $m$th gene gain event
  $(T_m,U_m)_{m=1,2,...}$ appears. Then, let $i_1:=1$ and $i_{r+1} :=
  \inf\{m > i_r: K_m \notin\{K_1,...,K_{m-1}\}$ for $r<K$. This means
  that $K_{i_1},...,K_{i_K}$ is the number of intervals in the order
  of the appearance of the first gene gain within each interval. Most
  importantly, $(\mathcal A_{n,N,K}^{(0)},\mathcal
  A_{n,N,K}^{(1)}...,\mathcal A_{n,N,K}^{(K)})\stackrel d = (\mathcal
  A_{n,N,K}^{(0)}, \mathcal A_{n,N,K}^{(K_{i_1})},...,\mathcal
  A_{n,N,K}^{(K_{i_K})})$ since the Poisson point process $\mathcal T$
  is independent of $(\mathcal A_{n,N,K}^{(1)},...,\mathcal
  A_{n,N,K}^{(K)})$ and the random graphs $(\mathcal
  A_{n,N,K}^{(1)},...,\mathcal A_{n,N,K}^{(K)})$ are exchangeable.

  Now, consider gene gain events on $\mathcal A_{n,N,K}^{(K_1)}$. By
  construction, the first gene gain event at time $T_1$ falls into
  $\Delta_{K_1}^K$. Hence, this graph is hit after an exponentially
  distributed time with rate $\theta/2$. (Note the difference to the
  rate $\theta/(2K)$ from the last step.) In order to model this, take
  $T_1$ (the time of the first gene gain in $\mathcal T$), determine a
  set of paths how to move through $\mathcal A_{n,N,K}^{(K_1)}$ and
  place a gene gain event after time $T_1$. In case the length of
  $\mathcal A_{n,N,K}^{(K_1)}$ is smaller than $T_1$, do
  nothing. Continuing, by construction, $\mathcal
  A_{n,N,K}^{(K_{i_2})}$ (recall $K_{i_2}=K_2$ if $K_2\neq K_1$) is
  hit by a gene gain event, this event occurs by time $T_2$ of
  $\mathcal T$. Again, determine a set of paths how to move through
  $\mathcal A_{n,N,K}^{(K_2)}$ and place a gene gain event after time
  $T_2$, if possible. Continue until $T_{i_K}$ and $\mathcal
  A_{n,N,K}^{(K_K)}$.

  In this construction, we can now let $K\to\infty$, which means that
  we construct infinitely many random graphs, $\mathcal A_{n,N}^{(0)},
  \mathcal A_{n,N}^{(1)},...$ such that the first $K+1$ are
  distributed according to $(\mathcal A_{n,N,K}^{(0)}, \mathcal
  A_{n,N,K}^{(K_{i_1})},...,\mathcal A_{n,N,K}^{(K_{i_K})})$. On these
  \sloppy infinitely many random graphs $\mathcal A_{n,N}^{(0)},
  \mathcal A_{n,N}^{(1)},...$, we can now use all points $(T_m, U_m)$
  in order to construct the resulting genomes, which we denote by
  $\widetilde{\mathcal G}^N$.

  We now show that ${\mathcal G}^{N,K}\xRightarrow{K\to\infty}
  \widetilde{\mathcal G}^N$ as well as ${\mathcal
    G}^{N,K}\xRightarrow{K\to\infty} {\mathcal G}^N$ (the latter being
  the set of genomes from the Moran model), implying that ${\mathcal
    G}^N \stackrel d = \widetilde{\mathcal G}^N$, i.e.\ the genomes
  $\mathcal G^N$ can be constructed from infinitely many random graphs
  by using all points in $\mathcal T$. We use the criterion from
  Remark~\ref{rem:kall}. For the convergence to $\widetilde{\mathcal
    G}^N$, note that both ${\mathcal G}^{N,K}$ and
  $\widetilde{\mathcal G}^N$ can be constructed on a joint probability
  space, using the same (infinitely many) random graphs. The
  difference in construction is that for $\widetilde{\mathcal G}^N$,
  all points in $\mathcal T$ are used, while in ${\mathcal G}^{N,K}$
  only the first points within each $\Delta_i^K$ are used. Moreover,
  as long as at most one gene gain event hits $\mathcal
  A_{n,N,K}^{(i)}$ the random measures $\mathcal G^{N,K}$ and
  $\widetilde{\mathcal G}^N$ agree on $\Delta_i^K$. Hence, we write,
  for any Borel set $A\subseteq \{1,...,n\}\times [0,1]$ and
  $k=0,1,2,...$ and using Lemma~\ref{l:length} in the last step
  \begin{equation}
    \label{eq:conv1}
    \begin{aligned}
      |\mathbb P(\widetilde{\mathcal G}^N(A)=k) & - \mathbb P(\mathcal
      G^{N,K}(A)=k)| \\ & \leq \mathbb P\Big(\bigcup_{i=1}^K
      \mathcal A^{(i)}_{N,K} \text{ hit by 2 gene gain events}\Big) \\
      & \leq K\cdot \mathbb P(\mathcal A^{(1)}_{N,K} \text{ hit by 2
        gene gain events}) \\ & \leq K\cdot \mathbb E[1-\exp(-\theta
      L(\mathcal A_N^{(1)})/(2K))(1+\theta L(\mathcal A_N^{(1)})/(2K))] \\
      & \leq \frac{K \theta^2}{4K^2}\mathbb E[(L(\mathcal A_N^{(1)}))^2]
      \xrightarrow{K\to\infty} 0,
    \end{aligned}
  \end{equation}
  implying (i) of Remark~\ref{rem:kall}.2. Now, let $L(\mathcal
  A_{1,N,K}^{(1)})$ and $L(\mathcal A_{1,N}^{(1)})$ be the lengths of
  the random graphs $\mathcal A_{1,N,K}^{(1)}$ and $\mathcal
  A_{1,N}^{(1)}$, which correspond to $\mathcal G_1^{N,K}$ and $\widetilde{\mathcal G}_1^N$, respectively. Note that $\mathcal A_{1,N,K}^{(1)}
  \stackrel d = \mathcal A_{1,N}^{(1)}$. By construction, we write
  \begin{equation}
    \label{eq:GNK1}
    \begin{aligned}
      \mathcal G^{N,K}_1([0,1]) & = \sum_{m=1}^K 1_{L(\mathcal
        A_{1,N,K}^{(K_m)})\geq T_m},\\
      \widetilde{\mathcal G}^{N}_1([0,1]) & = \sum_{m=1}^\infty 1_{L(\mathcal
        A_{1,N}^{(m)})\geq T_m}.
    \end{aligned}
  \end{equation}
  \sloppy Then, by exchangeability, for a rate-$1$-exponentially
  distributed random variable $X$ and $\mathbb E[L(\mathcal
  A_{1,N,K}^{(1)})] = \mathbb E[L(\mathcal A_{1,N}^{(1)})]\leq \mathbb
  E[L(\mathcal A_{1})]<\infty$ by Lemma~\ref{l:length},
  \begin{equation}
    \label{eq:conv2}
    \begin{aligned}
      \mathbb E[\mathcal G^{N,K}( & \{1,...,n\}\times [0,1])] = n
      \cdot \mathbb E[\mathcal G_1^{N,K}([0,1])] \\ & = nK \cdot
      \mathbb P(L(\mathcal A_{1,N,K}^{(1)}) \geq \tfrac{2K}{\theta} X)
      \\ & = nK \cdot \mathbb E[1-\exp(-\theta L(\mathcal
      A_{1,N,K}^{(1)})/(2K))] \\ & \xrightarrow{K\to\infty} n\theta
      \cdot \mathbb E[ L(\mathcal
      A_{1,N}^{(1)})] = n \cdot \mathbb E[\widetilde{\mathcal G}_1^{N}([0,1])] \\
      & = \mathbb E[\widetilde{\mathcal G}^{N}( \{1,...,n\}\times [0,1])],
    \end{aligned}
  \end{equation}
  which gives (ii) of the convergence criterion given in
  Remark~\ref{rem:kall}.2. (Note that the finiteness of the right hand
  side of the last equation can be seen from $\mathbb E[L(\mathcal
  A_{1,N}^{(1)})]<\infty$; see Lemma~\ref{l:length}.) Next, we come to
  the convergence ${\mathcal G}^{N,K}\xRightarrow{K\to\infty}
  {\mathcal G}^N$. Again, we observe that both random measures can be
  constructed on one probability space. Here, use the $K+1$ random
  graphs in order to construct ${\mathcal G}^{N,K}$ first and draw
  them as a part of a graphical construction of the
  $\text{Moran}_\Delta$-model, starting at time~$0$. Note that in the
  $\text{Moran}_\Delta$-model, gene gain events for a gene
  $u\in\Delta_k^K$ can lead to loss of another gene $v\in\Delta_k^K$,
  if the line of the gene gain event carries gene $v$. For such genes,
  which are lost in the $\text{Moran}_\Delta$-model, put additional
  gene loss and transfer events in the (regular) Moran model. Again,
  we claim that ${\mathcal G}^{N,K} = {\mathcal G}^N$ if every random
  graph $\mathcal A_{n,N,K}^{(1)},...,\mathcal A_{n,N,K}^{(K)}$ is hit
  by at most one gene gain event. Hence, the same calculations as
  in~\eqref{eq:conv1} and ~\eqref{eq:conv2} gives the convergence
  ${\mathcal G}^{N,K}\xRightarrow{K\to\infty} {\mathcal G}^N$ as well.
  
  ~

  \noindent
  {\it Step 4: $(\mathcal G_1^{N},...,\mathcal G_n^{N})
    \xRightarrow{N\to\infty}({\mathcal G_1},...,{\mathcal G_n})$,
    constructed from infinitely many random graphs}: By now, we have
  shown that $(\mathcal G_1^{N},...,\mathcal G_n^{N})$ can be
  constructed from infinitely many random graphs $\mathcal A_N^{(0)},
  \mathcal A_N^{(1)},...$ such that $\mathcal A_N^{(0)}$ is a Kingman
  coalescent started with $n$ lines, $\mathcal A_N^{(i+1)}$ has
  additional coalescence events, {\it regular} split events at rate
  $\gamma (N-m)/(2N)$ to new lines, if there are a total of $m$ lines
  in graphs $\mathcal A_N^{(0)},...\mathcal A_N^{(i+1)}$ and {\it
    irregular} split events at rate $\gamma m/(2N)$ to already
  existing lines, if there are $m$ lines in graphs $\mathcal
  A_N^{(0)},...\mathcal A_N^{(i+1)}$. Now, as $N\to\infty$, the rate
  of regular splitting events converges to $\gamma/2$. By almost sure
  convergence of the random graphs, the genomes converge as well,
  i.e.\ $(\mathcal G_1^{N},...,\mathcal G_n^{N})
  \xRightarrow{N\to\infty}({\mathcal G_1},...,{\mathcal
    G_n})$. Precisely, we again have to check (i) and (ii) of
  Remark~\ref{rem:kall}.2. For (i), first note that (for $L(\mathcal
  A^{(0)}_{N}\cup\cdots\cup\mathcal A^{(i)}_{N})$ the total length of
  $\mathcal A_N^{(0)},...\mathcal A_N^{(i)}$)
  \begin{equation}
    \label{eq:conv5}
    \begin{aligned}
      & \mathbb P(\mathcal G_k([0,1])\geq C) \leq \frac{1}{C} \mathbb
      E[\mathcal G_k([0,1])] = \frac{\theta}{2C} \mathbb E[L(\mathcal
      A_n^{(1)})] \xrightarrow{C\to\infty} 0,\\
      & \mathbb P\Big( \mathcal A^{(i)}_{N} \text{ hit by irregular
        split event}\Big) \\ & \qquad = 1 - \mathbb E[\exp(-\gamma
      L(\mathcal A^{(0)}_{N}\cup\cdots\cup\mathcal A^{(i)}_{N})/(2N))]
      \\ & \qquad \leq \frac{(i+1)\gamma}{2N} \mathbb E[L(\mathcal
      A^{(1)}_{N})] \xrightarrow{N\to\infty} 0.
    \end{aligned}
  \end{equation}
  according to Lemma~\ref{l:length}.  
  %Let $m$ be larger than $C$ times
  %the maximal number of lines in $\bigcup_{i=1}^C \mathcal
  %A^{(i)}_{N}$, such that at any time the rate at which a split event
  %along $\mathcal A^{(i)}_{N}$ occurs which hits $\bigcup_{i=1}^C
  %\mathcal A^{(i)}_{N}$ is smaller than $\gamma m/(2N)$. 
  So, we write for $A\subseteq \{1,...,n\}\times [0,1]$
  \begin{equation}
    \label{eq:conv3}
    \begin{aligned}
      |\mathbb P(& {\mathcal G}(A)=k) - \mathbb P(\mathcal
      G^{N}(A)=k)| \\ & \leq \mathbb P(\mathcal G(A)\geq C) + \mathbb
      P\Big(\bigcup_{i=1}^C
      \mathcal A^{(i)}_{N} \text{ hit by irregular split event}\Big) \\
      & \leq \mathbb P(\mathcal G(A)\geq C) + \sum_{i=1}^C \mathbb
      P(\mathcal A^{(i)}_{N} \text{ hit by irregular split event}\Big)
      \\ & \xrightarrow{N\to\infty} \mathbb P(\mathcal G(A)\geq C)
      \xrightarrow{C\to\infty} 0
    \end{aligned}
  \end{equation}
  by~\eqref{eq:conv5} implying (i) of Remark~\ref{rem:kall}.2. For
  (ii) (again noting that the same calculation holds for arbitrary
  compact $A\subseteq\{1,...,n\}\times [0,1]$), we have, since
  $(L(\mathcal A_N^{(1)}))_{N=1,2,...}$ is uniformly integrable, by
  standard arguments (see e.g.\ \cite{Billingsley1999}, Theorem~3.5),
  \begin{equation}
    \label{eq:conv4}
    \begin{aligned}
      \mathbb E[\mathcal G^{N}( & \{1,...,n\}\times [0,1])] = n\theta
      \cdot \mathbb E[ L(\mathcal A_{1,N}^{(1)})] \\ &
      \xrightarrow{N\to\infty}n\theta \cdot \mathbb E[ L(\mathcal
      A_{1}^{(1)})] = \mathbb E[\mathcal G( \{1,...,n\}\times [0,1])]<
      \infty.
    \end{aligned}
  \end{equation}

  ~

  \noindent
  {\it Step 5: Convergence of moments}: \sloppy The calculations are
  similar to~\eqref{eq:conv3} and~\eqref{eq:conv4}. We only have to
  deal with finiteness of moments in order to show $\mathcal
  G_1^N\otimes \cdots \otimes\mathcal G_n^N
  \xRightarrow{N\to\infty}\mathcal G_1\otimes \cdots \otimes \mathcal
  G_j$. Here, (i) of Remark~\ref{rem:kall}.2 is implied
  by~\eqref{eq:conv3}. For (ii), we know that $(L(\mathcal A_N^{(1)})
  \cdots L(\mathcal A_N^{(n)}))_{N=1,2,...}$ is uniformly integrable
  by Lemma~\ref{l:length} (since $L(\mathcal A_N^{(1)}) \cdots
  L(\mathcal A_N^{(n)}) \leq L(\mathcal A_N^{(1)})^n + \cdots +
  L(\mathcal A_N^{(n)})^n$ and the latter is uniformly integrable by
  Lemma~\ref{l:length}) and $L(\mathcal A_N^{(1)}) \cdots L(\mathcal
  A_N^{(n)}) \xRightarrow{N\to\infty}L(\mathcal A_1) \cdots L(\mathcal
  A_n)$. Hence, 
  \begin{equation}
    \label{eq:conv4b}
    \begin{aligned}
      \mathbb E[ & \mathcal G_1^{N}\otimes \cdots \otimes\mathcal
      G_n^{N}( (\{1,...,n\}\times [0,1]))^n] = \tfrac{\theta^n}{2^n}
      \cdot \mathbb E[ L(\mathcal A_{1,N}^{(1)})\cdots L(\mathcal
      A_{1,N}^{(n)})] \\ &
      \xrightarrow{N\to\infty}\tfrac{\theta^n}{2^n} \cdot \mathbb E[
      L(\mathcal A_1)\cdots L(\mathcal A_n)] = \mathbb E[\mathcal G_1
      \otimes \cdots \otimes \mathcal G_n( (\{1,...,n\}\times
      [0,1])^n)] \\ & < \infty.
    \end{aligned}
  \end{equation}
\end{proof}

\section{Proofs of Theorems \ref{T1}--\ref{T3}}
\label{S:proofs}

\subsection{Proof of Theorem~\ref{T1}}
Using diffusion theory and Lemma~\ref{l:diff}, we obtain first moments
of all of the statistics $G_1^{(n)},\dots,G_n^{(n)}$ in
equilibrium. Moreover, the statistics as considered in
Theorem~\ref{T2} are linear combinations of
$G_1^{(n)},\dots,G_n^{(n)}$; see the first proof of Theorem~\ref{T2}
below.

We consider the diffusion~\eqref{eq:SDE} with infinitesimal mean and
variance
\begin{align}\notag
  \mu(x) = -\tfrac \rho 2 x + \tfrac \gamma 2 x(1-x), \qquad
  \sigma^2(x) = x(1-x).
\end{align}
The Green function for the diffusion, measuring the time the
diffusion, i.e.\ a gene, spends in frequency $x$ until eventual loss,
if the current frequency is $\delta\leq x$, is given
by %TODO check if only for small delta should be mentioned
\begin{align}\notag
  G(\delta,x) = 2 \frac{\phi(\delta)}{\sigma^2(x)\psi(x)},
\end{align}
where
\begin{align*}
  \psi(y) &:= \exp\left( -2 \int_0^y \frac{\mu(z)}{\sigma^2(z)} dz
  \right)
  = (1-y)^{-\rho} e^{-\gamma y},\\
  \phi(x) &:= \int_0^x \psi(y) dy.
\end{align*}
Following \cite[chapt. 7.11]{Durrett2008}, we introduce new genes in
frequency $\delta\ll 1$ at rate $\frac{\theta}{2}
\frac{1}{\phi(\delta)}$ in a consistent way. That is, the gene rises
in frequency to $\varepsilon>\delta$ with probability
$\frac{\phi(\delta)}{\phi(\epsilon)}$. Hence the number of genes in
frequency $x$ is Poisson with mean
\begin{align}\notag
  \frac{\theta}{2} \frac{1}{\phi(\delta)} G(\delta,x) = \theta
  \frac{e^{\gamma x}}{x(1-x)^{1-\rho}}.
\end{align}
The gene frequency spectrum is now given by
\begin{align*}
  \mathbb E[ G_k^{(n)} ] &= \binom{n}{k} \int_0^1 \theta
  \frac{e^{\gamma x}}{x(1-x)^{1-\rho}} x^k (1-x)^{n-k} dx \\ &=
  \binom{n}{k} \theta \int_0^1 e^{\gamma x} x^{k-1} (1-x)^{n-k-1+\rho}
  dx \\ &= \theta \binom{n}{k} (k-1)!
  \frac{\Gamma(n-k+\rho)}{\Gamma(n+\rho)} {}_1F_1(k;n+\rho;\gamma) \\
  & = \frac{\theta}{k} \frac{(n)_{
      k \downarrow}}{(n-1+\rho)_{k \downarrow}} \Big(1 +
  \sum\limits_{m=1}^{\infty} \frac{(k)_{m \uparrow}
    \gamma^m}{(n+\rho)_{m \uparrow} m!}\Big)
  % \\ \notag &= \mathbb E_{\gamma = 0}[ G_{(k)}^n ] \quad
  % {}_1F_1(k;n+\rho;\gamma)
\end{align*}
where ${}_1F_1(k;n+\rho;\gamma) = 1 + \sum\limits_{m=1}^{\infty}
\frac{(k)_{m \uparrow} \gamma^m}{(n+\rho)_{m \uparrow} m!}$ is a confluent
hypergeometric function (Kummer's function), see chapter 13 in \cite{NIST2010}.

\subsection{Proof of Theorem~\ref{T2}}
We give two proofs, one using diffusion theory and Theorem~\ref{T1},
one using the AGTG from Section~\ref{S:AGTG}.

\begin{proof}[Proof of Theorem~\ref{T2} using Theorem~\ref{T1}]
  Given the expected gene frequency spectrum from Theorem~\ref{T1}, it
  is now easy to compute first moments of $A$, $D$ and $G$ by using,
  in the infinite population limit,
  \begin{equation}
    \label{eq:429}
    \begin{aligned}
      A^{(1)} & \stackrel d = G_1^{(1)}, \qquad D^{(2)} \stackrel d = \tfrac 12 G_1^{(2)}, \\
      G^{(n)} & = |\mathcal G_1| + |\mathcal G_2\setminus \mathcal G_1|
      + \cdots + \Big| \mathcal G_n \setminus \bigcup_{i=1}^{n-1}
      \mathcal G_{i}\Big|
    \end{aligned}
  \end{equation}
  such that
  \begin{align*}
    \mathbb E[A^{(n)}] & = \mathbb E[A^{(1)}] = \mathbb E[G_1^{(1)}] =
    \frac{\theta}{\rho} \left( 1 +
      \sum\limits_{m=1}^{\infty} \frac{\gamma^m}{(1+\rho)_{m \uparrow}} \right), \\
    \mathbb E[D^{(n)}] & = \mathbb E[D^{(2)}] = \tfrac 12 \mathbb
    E[G_1^{(2)}] = \frac{\theta}{1 + \rho} \left( 1 +
      \sum\limits_{m=1}^{\infty} \frac{\gamma^m}{(2+\rho)_{m \uparrow} } \right), \\
    \mathbb E[G^{(n)}] & = \sum_{k=1}^n \tfrac 1k \mathbb E[G_1^{(k)}] =
    \sum_{k=1}^n \frac{\theta}{k} \frac{k}{k-1+\rho}\sum_{m=0}^\infty
    \frac{\gamma^m}{(k+\rho)_{m \uparrow}} \\ & = \theta
    \sum_{m=0}^\infty {\gamma^{m}} \sum_{k=0}^{n-1}
    \frac{1}{(k+\rho)_{{m+1}\uparrow}} \\ & = \theta
    \sum_{k=0}^{n-1}\frac{1}{k+\rho} + \theta\sum_{m=1}^\infty
    \frac{\gamma^{m}}{m} \sum_{k=0}^{n-1}\Big( \frac{1}{(k+\rho)_{m
        \uparrow}} - \frac{1}{(k+1+\rho)_{m \uparrow}}\Big) \\ &
    =\theta \sum_{k=0}^{n-1}\frac{1}{k+\rho} + \theta\sum_{m=1}^\infty
    \frac{\gamma^{m}}{m} \Big(\frac{1}{(\rho)_{m \uparrow}} -
    \frac{1}{(n+\rho)_{m \uparrow}}\Big).
  \end{align*}
\end{proof}

\begin{proof}[Proof of Theorem~\ref{T2} using the AGTG]
  First we note that $A^{(1)} = G^{(1)}$ and $D^{(2)} = \tfrac 12
  (|(\mathcal G_1^{(1)} \cup\mathcal G_1^{(2)}) \setminus \mathcal
  G^{(1)}| + |(\mathcal G_1^{(1)} \cup \mathcal G_1^{(2)}) \setminus
  \mathcal G^{(2)}|)$ such that
  \begin{align*}
    \mathbb E[A^{(n)}] & = \mathbb E[A^{(1)}] = \mathbb E[G^{(1)}],\\
    \mathbb E[D^{(n)}] & = \mathbb E[D^{(2)}] = \mathbb E[G^{(2)}] -
    \mathbb E[G^{(1)}],
  \end{align*}
  and it suffices to compute $\mathbb E[G^{(n)}]$ in the proof. We
  will abuse notation and write $dx$ and $dy$ for \emph{infinitely
    small} portions of the genome. In order to compute $\mathbb
  E[G^{(n)}]$, the idea is to write $\mathcal G^{(n)} := (\sum_{i=1}^n
  \mathcal G_i)\wedge 1$ and
  \begin{equation}
    \label{eq:431}
    \begin{aligned}
      \mathbb E[G^{(n)}] = \mathbb E\Big[ \int_0^1 \mathcal
      G^{(n)}(dx)\Big] = \int_0^1 \mathbb E[\mathcal G^{(n)}(dx)] = \int_0^1
      \frac\theta 2 \mathbb E[L(\mathcal A_n)] dx = \frac\theta 2
      \mathbb E[L(\mathcal A_n)],
    \end{aligned}
  \end{equation}
  such that we have to compute the expected length of $\mathcal A_n$,
  the AGTG for a single gene, which we denote by $L(\mathcal A_n)$.
  Therefore consider the birth and death process 
   $(Z_t)_{t\geq 0}$ with birth rate $\lambda_i = \gamma$ and death
  rate $\mu_i =i+\rho-1$.
  Recall that the hitting time $T$, when this birth and death process hits zero has the same distribution as
  $L(\mathcal A_n)/2$, see proof of Lemma~\ref{l:length}.
% 
%  from the proof of Lemma~\ref{l:length} that $L(\mathcal
%   A^n)/2$ has the same distribution as the hitting time $T$ of~0 of a
%   birth-death process
% 
  % Recall that the AGTG $\mathcal A^n$
  % starts with $n$ lines and number of lines in the jumps (i) from
  % $i$
  % to $i+1$ at rate $i \frac\gamma 2$ and (ii) from $i$ to $i-1$ at
  % rate $i \frac\rho2 + \frac{i(i-1)}2 = i(i+\rho-1)/2$ and is
  % stopped
  % only upon reaching~0 lines. Assuming that the AGTG has $i$ lines,
  % the length gained during time~$dt$ is $i\, dt$. Hence, we consider
  % the birth and death process
%  
  % By a simple time change argument, $L(\mathcal A^n)$ has the same
  % distribution as the time $T_0^{\mathcal Y} = \inf\{t\geq 0:
  % Y_t=0\}$.
  Now, it is well known (see e.g.\ \citealp[chapt. 4.7]{KarlinTaylor1975})
  that
  \begin{equation}
    \label{eq:432}
    \mathbb E[L(\mathcal A_n)] = \mathbb E[T|Z_0=n] = 
    \sum_{i=1}^\infty p_i + \sum_{k=1}^{n-1} \left( \prod_{r=1}^{k} \frac{\mu_r}{\lambda_r} 
    \right) \sum_{m=k+1}^\infty  p_m
  \end{equation}
  where
  \begin{equation}
    p_i = \frac{\lambda_1 \cdots \lambda_{i-1}}
    {\mu_1 \cdots \mu_i} = \frac{\gamma^{i-1}}{\rho(1+\rho)\cdots (i-1+\rho)} = 
    \frac{\gamma^{i-1}}{(\rho)_{i \uparrow}}.
  \end{equation}
  Combining~\eqref{eq:431} and~\eqref{eq:432} yields
  \begin{equation}
    \label{eq:912}
    \begin{aligned}
      \frac{1}{\theta} \mathbb E[G^{(n)}] & = \frac{1}{2} \mathbb
      E[L(\mathcal A_n)] %&=
      % \frac 1 2 \sum_{i=1}^\infty p_i + \frac 1 2
      % \sum_{r=1}^{n-1} \left( \prod_{k=1}^{r}
      %   \frac{\mu_k}{\lambda_k} \right) \sum_{m=r+1}^\infty  p_m\\
      \\ & = \sum_{i=1}^\infty \frac{\gamma^{i-1}}{(\rho)_{i
          \uparrow}} + \sum_{k=1}^{n-1}
      \frac{(\rho)_{k \uparrow}}{\gamma^k} \sum_{m=k+1}^\infty \frac{\gamma^{m-1}}{(\rho)_{m \uparrow}}\\
      &= \sum_{k=1}^{n-1}\sum_{m=k+1}^\infty
      \frac{\gamma^{m-1-k}}{(\rho+k)_{(m-k)\uparrow}} +
      \sum_{i=1}^\infty \frac{\gamma^{i-1}}{(\rho)_{i \uparrow}}
      \\ &= \sum_{m=1}^\infty \sum_{k=0}^{n-1} \frac{\gamma^{m-1}}{(\rho+k)_{m \uparrow}}\\
      &= \sum_{k=0}^{n-1} \frac {1}{k+\rho} + \sum_{m=1}^\infty
      \gamma^m \sum_{k=0}^{n-1}
      \frac{1}{(\rho+k)_{(m+1)\uparrow} } \\
      &= \sum_{k=0}^{n-1} \frac {1}{k+\rho} + \sum_{m=1}^\infty
      \frac{\gamma^m}{m}
      \sum_{k=0}^{n-1}  \left( \frac{1}{(\rho+k)_{m \uparrow}} - \frac{1}{(\rho+k+1)_{m \uparrow}} \right)\\
      &= \sum_{k=0}^{n-1} \frac {1}{k+\rho} + \sum_{m=1}^\infty
      \frac{\gamma^m}{m} \left( \frac{1}{(\rho)_{m \uparrow}} -
        \frac{1}{(n+\rho)_{m \uparrow}} \right).
    \end{aligned}
  \end{equation}
  According to \eqref{eq:429}, $\mathbb E[A^{(n)}]$ is readily
  obtained and the expected number of differences is given
  using~\eqref{eq:912} by
  \begin{align*}
    \frac{1}{\theta}\mathbb E[D^{(n)}] &= \frac{1}{\theta}\big(\mathbb
    E[G^{(2)}] - \mathbb E[G^{(1)}]\big)
    = \sum_{m=1}^\infty \frac{\gamma^{m-1}}{(\rho+1)_{m \uparrow}}\\
    &= \sum_{m=0}^\infty \frac{\gamma^{m}}{(1+\rho)_{(m+1)\uparrow}}
    = \frac{1}{1+\rho} \left( 1 + \sum_{m=1}^\infty
      \frac{\gamma^m}{(2+\rho)_{m \uparrow}} \right).
  \end{align*}
\end{proof}

\subsection{Proof of Theorem~\ref{T3}}
Since $A^{(1)} = \mathcal G_1([0,1]) = \int_0^1 \mathcal G_1(dx)$, we
can use the first and second moment measures of $\mathcal G_1$ in
order to compute the moments of $A^{(1)}$; see e.g.\ \cite{Daley},
Section~5.4, which are given by $A\mapsto \mathbb E[\mathcal G_1(A)]$
for the first and $(A,B)\mapsto \mathbb E[\mathcal G_1(A)\mathcal
G_1(B)]$ for the second moment. (A similar statement holds for the random measure $\mathcal
D_{1,2} := |\mathcal G_1^N - \mathcal G_2^N|$ and $2D^{(2)} = \mathcal
D_{1,2}([0,1])$.) For the integral with respect to these measures, we
will -- as in~\eqref{eq:431} -- abuse notation (see e.g.\ the term
$\mathbb V[\mathcal G_1(dx)]$ below) such that
\begin{align}\label{eq:233b}
  \mathbb V[A^{(1)}] & = \int_0^1 \mathbb V[\mathcal G_1(dx)] +
  \int_0^1 \int_0^1 1_{x\neq y}\mathbb{COV}[\mathcal G_1(dx), \mathcal
  G_1(dy)].
\end{align}
First, given $\mathcal A^{(1)}_1$ (which is distributed like the AGTG for a single gene $\mathcal A_1$), the
probability of a gene gain event on $\mathcal A^{(1)}_1$ is
$(\theta/2)L(\mathcal A^{(1)}_1) dx$ such that
\begin{equation}
  \label{eq:234}
  \begin{aligned}
    \mathbb V[|\mathcal G_1(dx)|] &= \mathbb V\big[\mathbb E[
    |\mathcal G_1(dx)|\,|\mathcal A^{(1)}_1]\big] + \mathbb E\big[
    \mathbb V[|\mathcal G_1(dx)||\mathcal A^{(1)}_1] \big]\\
    &= \mathbb V\big[ \tfrac{\theta}{2} L(\mathcal A^{(1)}_1) dx \big] +
    \mathbb E \big[ \mathbb E[|\mathcal G_1(dx)|\, |\mathcal
    A^{(1)}_1]\big]\\
    & = \frac{\theta}{2} \mathbb E[L(\mathcal A_1)] dx + \mathcal
    O(dx^2) = \mathbb E[A^{(1)}]dx + \mathcal O(dx^2) \\ & =
    \frac{\theta}\rho\Big( 1 + \frac{\gamma(2+\rho) +
      \gamma^2}{(1+\rho)(2+\rho)} + \mathcal O(\gamma^3)\Big) dx+
    \mathcal O(dx^2)
  \end{aligned}
\end{equation}
Second, for $x\neq y$,
\begin{align*}
  \mathbb{COV}[|\mathcal G_1(dx)|,|\mathcal G_1(dy)|] &=
  \mathbb{COV}\Big[\mathbb E[|\mathcal G_1(dx)|\,|\mathcal
  A^{(1)}_1,\mathcal A^{(2)}_1], \mathbb E[|\mathcal G_1(dy)|\,|\mathcal
  A^{(1)}_1,\mathcal A^{(2)}_1]\Big] \\ & \qquad \qquad \qquad + \mathbb E\Big[
  \mathbb{COV}[|\mathcal G_1(dx)|, |\mathcal G_1(dy)| \,
  |\mathcal A^{(1)}_1,\mathcal A^{(2)}_1] \Big]\\
  &= \mathbb{COV}\big[ \tfrac{\theta}{2} L(\mathcal A^{(1)}_1) dx,
  \tfrac{\theta}{2} L(\mathcal A^{(2)}_1) dy \big] \\
  & = \frac{\theta^2}{4} \mathbb{COV}[L(\mathcal A^{(1)}_1),L(\mathcal A^{(2)}_1)]
  dx\, dy
\end{align*}
since $|\mathcal G_1(dx)|$ and $|\mathcal G_1(dy)|$ are independent
given $\mathcal A^{(1)}_1,\mathcal A^{(2)}_1$. Now we compute the term %\sloppy
$\mathbb{COV}[L(\mathcal A^{(1)}_1),L(\mathcal A^{(2)}_1)]$ up to second order in
$\gamma$. For this computation, we make use of the fact that the AGTG
for two genes can be defined in analogy to the AGTG for a single gene
from Definition~\ref{def:AGTGsingle}, but with two different kind of
loss and transfer events. Precisely, we consider the following random
graph: starting with $x$ lines of state \emph{only gene 1}, $y$ lines
of state \emph{both genes} and $z$ lines \emph{only gene 2}, pairs of
lines coalesce at rate~1. (Note that coalescence of a line of state
\emph{only gene 1} and a line of state \emph{only gene 2} gives a
single line of state \emph{both genes}.)  Lines where gene~1 (gene 2)
is considered are lost at rate $\rho/2$. (If a line of state
\emph{only gene 1} (\emph{only gene 2)} is lost, it is lost
completely, while if a line of state \emph{both genes} is lost, it
turns into a line of state \emph{only gene 2} (\emph{only gene 1}).)
Finally, every line of state \emph{only gene 1} (\emph{only gene 2})
is split at rate $\gamma/2$ and the new line is again of state
\emph{only gene 1} (\emph{only gene 2}). In addition, a line of state
\emph{both genes} splits at rate $\gamma$ and the new line is of state
\emph{only gene 1} or \emph{only gene 2}, both with probability 1/2.
The length of the graph of lines at states with gene 1 (gene 2),
i.e. either at state \emph{only gene 1} (\emph{only gene 2}) or
\emph{both genes} is denoted $L_1(t)$ ($L_2(t)$) if a sample from time
$t$ of the population is considered.  We write $\mathbb E_{xyz}[.]$
for the expected value if the process is started as above.

It is important to note that $\mathbb E[L(\mathcal A^{(1)}_1)L(\mathcal
A^{(2)}_1)] = \mathbb E_{010}[L_1(t)L_2(t)]$ for any $t$, since the AGTG
describes the population in equilibrium. In order to compute $\mathbb
E_{010}[L_1(t) L_2(t)]$, we use a time derivative and write
\begin{align*}
  \mathbb E_{010}[L_1(t+dt)L_2(t+dt)] & = (1-(\gamma+\rho)dt) \mathbb
  E_{010}[(L_1(t)+dt)(L_2(t)+dt)] \\ & \qquad + \gamma dt\cdot \mathbb
  E_{110}[(L_1(t)+dt)(L_2(t)+dt)] + \rho dt\cdot 0
\end{align*}
Using that the AGTG is in equilibrium and ignoring effects of order
$dt^2$, we obtain (for $L_i :=L_i(t)$, $i=1,2$)
\begin{equation}
  \label{eq:lin1}
  \begin{aligned}
    (\gamma + \rho) \mathbb E_{010}[L_1L_2] & = \mathbb E_{010}[L_1 +
    L_2] + \gamma\cdot \mathbb E_{110}[L_1L_2],\\
    (1 + \tfrac 32 \rho + \tfrac 32 \gamma)\mathbb E_{110}[L_1L_2] & =
    \mathbb E_{110}[L_1+2L_2] + \gamma \cdot \mathbb E_{210}[L_1L_2] +
    \tfrac 12 \gamma \mathbb E_{111}[L_1L_2] \\ & \qquad \qquad \quad
    + \tfrac 12 \rho \cdot
    \mathbb E_{101}[L_1L_2] + (1+\tfrac 12 \rho) \cdot \mathbb E_{010}[L_1L_2],\\
    (3 + 2\rho)\mathbb E_{210}[L_1L_2] & = \mathbb E_{210}[L_1+3L_2] +
    (3+ \rho)\cdot \mathbb E_{110}[L_1L_2] \\ & \qquad \qquad \qquad
    \qquad +
    \tfrac 12 \rho \cdot \mathbb E_{201}[L_1L_2] + \mathcal O(\gamma),\\
    (3 + 2\rho)\mathbb E_{111}[L_1L_2] & = \mathbb E_{111}[2L_1 +
    2L_2] + \mathbb E_{020}[L_1L_2] \\ & \qquad + (2+\rho)\cdot
    \mathbb E_{110}[L_1L_2] + \rho\cdot \mathbb E_{201}[L_1L_2] +
    \mathcal
    O(\gamma),\\
    (1+\gamma+\rho)\mathbb E_{101}[L_1L_2] & = \mathbb E_{101}[L_1 +
    L_2] + \mathbb E_{010}[L_1L_2] + \gamma\cdot \mathbb E_{201}[L_1L_2],\\
    (3 + \tfrac 32\rho)\mathbb E_{201}[L_1L_2] & = \mathbb E_{201}[L_1
    + 2L_2] + (1 + \rho) \cdot \mathbb E_{101}[L_1L_2] \\ & \qquad
    \qquad \qquad \qquad \qquad + 2\cdot \mathbb E_{110}[L_1L_2] +
    \mathcal O(\gamma),\\
    (1 + 2\rho)\mathbb E_{020}[L_1L_2] & = \mathbb E_{020}[2L_1 +
    2L_2] + \mathbb E_{010}[L_1L_2] \\ & \qquad \qquad \qquad \qquad
    \qquad + 2\rho \cdot \mathbb E_{110}[L_1L_2] + \mathcal O(\gamma).
  \end{aligned}
\end{equation}
Note that some terms $\mathcal O(\gamma)$ were written which will not
lead to the first two leading terms in $\mathbb
E_{010}[L_1(t)L_2(t)]$. The expectations $\mathbb E_j[L_i]$ for
$i=1,2$ and $j\in\{010,110,101,210,111,201\}$ can readily be computed
using the AGTG for a single gene, since
\begin{align}
  \label{eq:lin2}
  \mathbb E_{xyz}[L_1] = \mathbb E[L(\mathcal A_{x+y})]\text{ and
  }\mathbb E_{xyz}[L_2] = \mathbb E[L(\mathcal A_{y+z})].
\end{align}
We use from~\eqref{eq:912} that
\begin{align*}
  \mathbb E[L(\mathcal A_n)] & = \sum_{k=0}^{n-1} \frac{2}{k+\rho} +
  2\gamma \frac{n}{\rho(n+\rho)} + \gamma^2 \frac{n(n +
    2\rho+1)}{\rho(\rho+1)(n+\rho)(n+\rho+1)} + \mathcal O(\gamma^3),
\end{align*}
such that
\begin{equation}
  \label{eq:lin3}
  \begin{aligned}
    \mathbb E[L(\mathcal A_1)] & = \frac 2\rho\Big(1 +
    \frac{\gamma}{1+\rho}\Big) + \gamma^2
    \frac{2}{\rho(\rho+1)(\rho+2)}
    + \mathcal O(\gamma^3),\\
    \mathbb E[L(\mathcal A_2)] & = \frac {2+4\rho}{\rho(\rho+1)} + \frac{4\gamma}{\rho(\rho+2)}  + \mathcal O(\gamma^2) ,\\
    % + \gamma^2 \frac{4\rho+6}{\rho(\rho+1)(\rho+2)(\rho+3)}
    % + \mathcal O(\gamma^3),\\
    \mathbb E[L(\mathcal A_3)] & = \frac {6\rho^2 + 12\rho +
      4}{\rho(\rho+1)(\rho+2)} + \mathcal O(\gamma).
  \end{aligned}
\end{equation}
Solving~\eqref{eq:lin1} using~\eqref{eq:lin2} and~\eqref{eq:lin3}
gives
\begin{equation}
  \label{eq:lin4}
  \begin{aligned}
    \mathbb{COV}[L(\mathcal A^{(1)}_1), L(\mathcal A^{(2)}_1)] & = \mathbb
    E_{010}[L_1L_2] - \mathbb E[L(\mathcal A_1)]^2 \\ & =
    \frac{4}{\rho(1+\rho)^2(3+2\rho)(2+7\rho+6\rho^2)}\gamma^2 +
    \mathcal O(\gamma^3).
  \end{aligned}
\end{equation}
Combining~\eqref{eq:lin4} with~\eqref{eq:234} and~\eqref{eq:233b}
gives the result.

~

\noindent
To compute the variance for the number of differences $D^{(2)}$ we
will use a similar approach. Recall $\mathcal D_{1,2} = |\mathcal
G_1^N - \mathcal G_2^N|$ and $2D^{(2)} = \int_0^1 \mathcal
D_{1,2}(dx)$. Thus, 
\begin{equation}
  \label{vd1}
  \mathbb V[2D^{(2)}]  = \int_0^1 \mathbb V[\mathcal D_{1,2}(dx)] +
  \int_0^1 \int_0^1 1_{x\neq y}\mathbb{COV}[\mathcal D_{1,2}(dx), \mathcal
  D_{1,2}(dy)].
\end{equation}
Let $\mathcal A^{(i,\text{sing})}_2$ be the subgraph of $\mathcal
A_2^{(i)}$ consisting of branches leading to either individual~1 or
individual~2 but not to both. If $L(\mathcal A^{(i,\text{sing})}_2)$
denotes its length, given $\mathcal A_2^{(i)}$, the chance of a gene
gain event in $dx$ leading to a difference between two given
individuals is $(\theta/2) L(\mathcal A^{(i,\text{sing})}_2)dx$ such
that (compare with \eqref{eq:234})
\begin{equation}
  \label{eq:vd2}
  \begin{aligned}
    \mathbb V[\mathcal D_{1,2}(dx)] &= \mathbb V\big[\mathbb E[
    \mathcal D_{1,2}(dx)|\mathcal A^{(i)}_2]\big] + \mathbb E\big[
    \mathbb V[\mathcal D_{1,2}(dx)|\mathcal A^{(i)}_2] \big]\\
    &= \mathbb V\big[ \tfrac{\theta}{2} L(\mathcal
    A^{(i,\text{sing})}_2) dx \big] + \mathbb E \big[ \mathbb
    E[\mathcal D_{1,2}(dx) |\mathcal
    A^{(i)}_2]\big]\\
    & = \frac{\theta}{2} \mathbb E[L(\mathcal A^{(i,\text{sing})}_2)]
    dx + \mathcal O(dx^2) = \frac{\theta}{2} \mathbb E[2D^{(2)}]dx +
    \mathcal O(dx^2) \\ & = \theta \frac{2}{1 + \rho} + \frac{2
      \gamma}{ 2 + 3 \rho + \rho^2} + \mathcal O(\gamma^2) + \mathcal
    O(dx^2).
  \end{aligned}
\end{equation}

In the same way as seen below equation \eqref{eq:234} we obtain, for
$i\neq j$
\begin{align*}
  \mathbb{COV}[\mathcal D_{1,2}(dx),\mathcal D_{1,2}(dy)] & =
  \frac{\theta^2}{4} \mathbb{COV}[L(\mathcal A^{(i,\text{sing})}_2),L(\mathcal A^{(j,\text{sing})}_2)]
  dx\, dy.
\end{align*}
As $\mathbb E[L(\mathcal A^{(i,\text{sing})}_2)]\cdot \mathbb
E[L(\mathcal A^{(j,\text{sing})}_2)]$ is already known the remaining
part is to compute
\begin{equation}
  \label{eq:vd3}
  \begin{aligned}
    \mathbb E[L(\mathcal A^{(i,\text{sing})}_2) L(\mathcal A^{(j,\text{sing})}_2)] &=
    \frac{32} {(1 + \rho) (1 + 2\rho)} \\
    & \quad + \frac{ 32 (48 + 314 \rho + 611 \rho^2 + 464 \rho^3 + 120
      \rho^4) \gamma} {(1 + \rho) (2 + \rho) (1 + 2 \rho)^2 (3 + 2
      \rho) (2 + 3 \rho) (6 + 5 \rho)} + \mathcal O(\gamma^2).
  \end{aligned}
\end{equation}
For that we will split $(\mathcal A_2^{(i)},\mathcal A_2^{(j)})$ into
two parts, $T(\mathcal A_2^{(i)},\mathcal A_2^{(j)})$ and $S(\mathcal
A_2^{(i)},\mathcal A_2^{(j)})$. Recall that there are three different
types of events in $(\mathcal A_2^{(i)},\mathcal A_2^{(j)}) $, namely
loss, merging lines and splitting lines. The first part, $ T(\mathcal
A_2^{(i)},\mathcal A_2^{(j)})$, contains solely the times
$T_1,T_2,\dots$ between these events, while the second part,
$S(\mathcal A_2^{(i)},\mathcal A_2^{(j)})$ contains the remaining
information from $(\mathcal A^1,\mathcal A^2)$ on which lines split,
merge and get lost, i.e.\ it is possible to describe the
structure/topology/shape of the AGTG from $S(\mathcal
A_2^{(i)},\mathcal A_2^{(j)})$. Note that given $S(\mathcal
A_2^{(i)},\mathcal A_2^{(j)})$, the times $T_1 = T_1(S(\mathcal
A_2^{(i)},\mathcal A_2^{(j)})), T_2 = T_2(S(\mathcal
A_2^{(i)},\mathcal A_2^{(j)})),...$ are independent exponentially
distributed random variables with rates measurable with respect to
$S(\mathcal A_2^{(i)},\mathcal A_2^{(j)})$. In particular, the number
of lines between the $k$th and $(k+1)$st time in $T(\mathcal
A_2^{(i)},\mathcal A_2^{(j)})$, which lead to either one or the other
of the individuals, but not to both, denoted by $D_k^i =
D_k^i(S(\mathcal A_2^{(i)},\mathcal A_2^{(j)}))$, is $S(\mathcal
A_2^{(i)},\mathcal A_2^{(j)})$-measurable and
\begin{equation}
  L(\mathcal A^{(i,\text{sing})}_2) =  \sum_k D_k^i T_k
\end{equation}
Let $\mathcal S$ be the space of all possible shapes which can be
taken by $S(\mathcal A_2^{(i)},\mathcal A_2^{(j)}) $ and let $\mathcal
S_{\gamma^2} := \{s \in \mathcal S : \mathbb P(s) \notin \mathcal
O(\gamma^2) \}$, i.e.\ $ \mathcal S_{\gamma^2}$ contains all shapes
which have at most one splitting event. Within $\mathcal
S_{\gamma^2}$, there are at most 8 events before $(\mathcal
A_2^{(i)},\mathcal A_2^{(j)})$ has lost all lines, so we can write
\begin{align*}
  \mathbb E[L(\mathcal A_2^{(i,\text{sing})}) & L(\mathcal A_2^{(j,\text{sing})})] = \mathbb E
  [ \mathbb E [ L(\mathcal A_2^{(i,\text{sing})}) L(\mathcal A_2^{(j,\text{sing})})] |
  S(\mathcal A_2^{(i)},\mathcal A_2^{(j)}) ] ] \\ &= \sum_{s \in
    \mathcal S_{\gamma^2}} \mathbb P(s) \cdot \mathbb E [ L(\mathcal
  A_2^{(i,\text{sing})}) L(\mathcal A_2^{(j,\text{sing})})] | S(\mathcal A_2^{(i)},\mathcal
  A_2^{(j)})=s ]
  + \mathcal O (\gamma^2) \\
  &= \sum_{s \in \mathcal S_{\gamma^2}} \mathbb P(s) \cdot \mathbb
  E\Big[ \sum_{k=1}^8 D_k^i T_k \sum_{k=1}^8 D_k^j T_k|S(\mathcal
  A_2^{(i)},\mathcal A_2^{(j)})=s \Big] + \mathcal O (\gamma^2) \\
  &= \sum_{s \in \mathcal S_{\gamma^2}} \mathbb P(s) \sum_{k=1}^8
  D_k^i(s) D_k^j(s)
  \mathbb E[T_k^2(s)] \\
  & \quad + \mathbb P(s) \sum_{1\leq k, k' \leq 8 ; k \neq k'}
  D_k^i(s) D_{k'}^j(s) \mathbb E[T_k(s)] \mathbb E[T_{k'}(s)] +
  \mathcal O (\gamma^2)
\end{align*}
As $\mathcal S_{\gamma^2}$ has more than $5000$ elements we used
Mathematica to compute
% 5234 Thus we are left with the computation of
$\mathbb P(s)$ -- see the accompanying file available at the arXiv (\url{http://arxiv.org/abs/1301.6547}) -- the variables
$D_k^i(s)$, resp. $D_k^j(s)$, and the parameters of the exponentially
distributed times $T_k(s)$ for $1 \leq k \leq 8$ and all $s \in
\mathcal S_{\gamma^2}$.  Combining \eqref{eq:vd2} and \eqref{eq:vd3}
gives the result as shown in \eqref{eq:T3e}.

\subsubsection*{Acknowledgments}
We thank Wolfgang Hess for fruitful discussions. The DFG is
acknowledged for funding via the project PP672/2-1. The SPP 1590
funded by the DFG is acknowledged for travel support.

\bibliographystyle{chicago}

\end{document}